\theoremstyle{plain} \numberwithin{equation}{section}
\newtheorem{thm}{Theorem}[section]
\newtheorem{cor}[thm]{Corollary}
\newtheorem{conj}{Conjecture}
\newtheorem{lem}[thm]{Lemma}
\newtheorem{prop}[thm]{Proposition}
\newtheorem*{clm*}{Claim}
\theoremstyle{definition}
\newtheorem{defn}[thm]{Definition}
\newtheorem{rmk}[thm]{Remark}
\newtheorem{example}[thm]{Example}
 \newcommand{\im}{\mathop{\rm Im}\nolimits}
  \newcommand{\Hom}{\mathop{\rm Hom}\nolimits}
 \newcommand{\hF}{{\widehat{F}}}
 \newcommand{\hFk}[1]{\hF\langle #1\rangle}
 \newcommand{\NN}{{\mathbb{N}}}
 \newcommand{\ZZ}{{\mathbb{Z}}}
 \newcommand{\QQ}{{\mathbb{Q}}}
 \newcommand{\RR}{{\mathbb{R}}}
 \newcommand{\pr}{{\mathrm{pr}}}
 \renewcommand{\check}{\widecheck}
 \newcommand{\ignore}[1]{} 
 \newcommand{\limslender}{$\lim$-slender{}}
 \newcommand{\nslender} {$n$-slender} 
 \newcommand{\slender}{slender} 
 \newcommand{\HEG}{{\mathbb H}}
 \newcommand{\Cc}{{\mathcal C}}
 \newcommand{\HH}{\mathcal{H}}
\def\hhmm{\number\hh:\ifnum\mm<10{}0\fi\number\mm}
\begin{document}

\title[HomLim / \today, compiled at: \ \hhmm] {Inverse limit slender groups}


\author[G. Conner]{Gregory R. Conner$^1$}
\address{
Department of Mathematics,
Brigham Young University,
Provo, UT 84602, USA}
\email{conner@mathematics.byu.edu}

\author[W. Herfort]{Wolfgang Herfort$^2$}
\address{
Institute for Analysis and Scientific Computation
Technische Universit\"at Wien
Wiedner Hauptstra\ss e 8-10/101}
\email{wolfgang.herfort@tuwien.ac.at}

\author[C. Kent]{Curtis Kent$^3$}
\address{
Department of Mathematics,
Brigham Young University,
Provo, UT 84602, USA}
\email{curtkent@mathematics.byu.edu}

\author[P. Pave\v si\'c]{Petar Pave\v si\'c$^4$}
\address{
Faculty of Mathematics and Physics
University of Ljubljana
Jadranska 21
Ljubljana, Slovenia}
\email{petar.pavesic@fmf.uni-lj.si}

\thanks{$^1$ Supported by Simons Foundation Collaboration Grant 646221.}
\thanks{$^2$ The second author is grateful for the warm hospitality at the Mathematics Department of BYU in February 2018.}
\thanks{$^3$ Supported by Simons Foundation Collaboration Grant 587001.}
\thanks{$^4$ Supported by the Slovenian Research Agency program P1-0292 and grants N1-0083, N1-0064}

\date{\today}

\begin{abstract}
Classically, an abelian group $G$ is said to be \emph{slender} if every homomorphism from the countable product
$\ZZ^\NN$ to $G$ factors through the projection to some finite product $\ZZ^n$. Various authors have
proposed generalizations to non-commutative groups, resulting in a plethora of similar but not completely
equivalent concepts. In the first part of this work we present a unified treatment of these concepts
and examine how are they related. In the second part of the paper we study slender groups in the context
of co-small objects in certain categories, and give several new applications including the proof that
certain homology groups of Barratt-Milnor spaces are cotorsion groups and a universal coefficients
theorem for \v{C}ech cohomology with coefficients in a slender group.
\end{abstract}

\subjclass[2010]{Primary 54B25; Secondary 54B35,54H20,54C   }
\keywords{Peano continuum, inverse limit functor, Hawaiian earring, slender}
\vspace*{-2cm}
\maketitle

\section{Introduction}
\label{sec:Introduction}

In this introductory section we describe prior work on slender groups and give a brief summary of our
contributions.

\subsection{Prior work}

Origins of the theory of slender slender groups can be traced back to Baer's \cite{Baer1937} proof
that the group $\ZZ^\NN$ is not free and Specker's  \cite{Specker1950} remarkable
result that every homomorphism from $\ZZ^\NN$  (and from many of its subgroups) to $\ZZ$ factors through a
projection to some finite product $\ZZ^n$. This may be restated as
$$\Hom(\ZZ^\NN,\ZZ)\cong \bigoplus_{n\in\NN}\ZZ,$$
i.e., the dual of a countable product of groups $\ZZ$ is their countable sum.  A few years later
{\L}o\'{s} called a group $G$ \emph{slender} if it is torsion-free abelian, and if every homomorphism
from $\ZZ^\NN$ to $G$ factors through some finite product (see Fuchs \cite{Fuchs1958}).
He also proved that the class of slender groups is closed under subgroups, extensions and arbitrary
direct sums, and that groups containing
$\QQ$ are not slender. In particular, for a slender group $G$ we have the relation
\begin{equation}\tag{$\ast$}
\Hom(\ZZ^\NN,G)\cong \bigoplus_{n\in\NN}G.
\end{equation}
S\k{a}siada \cite{Sasiada1959} proved that a countable abelian group
is slender if, and only if, it is torsion-free and does not contain a copy of $\QQ$. Finally, without
restrictions on the cardinality,
Nunke \cite{Nunke1961} showed that an abelian group is slender if, and only if,
it is torsion-free and does not contain $\QQ$,  $\ZZ^\NN$, or $p$-adic integers for any $p$.
The theory of slender abelian groups has a natural generalization to modules over arbitrary rings, and
there is a wide body of work on slender modules and rings,
but in this paper we will be mainly interested into generalizations of slenderness to non-commutative groups.

G\"obel \cite{Gobel1975} studied non-abelian groups $G$ that satisfy {\L}o\'{s}'s definition, and proved
that $G$ is slender if, and only if, every abelian subgroup of $G$ is slender. However, the generalization
of formula $(\ast)$ is not immediate, because for non-commutative
groups $\Hom(\ZZ^\NN,G)$ is only a pointed set. G\"obel considered countable products of
arbitrary groups and showed that $G$ is slender if, and only if, the set $\Hom(\prod_{n\in\NN} A_n,G)$
can be expressed as a subset of the product $\prod_n\Hom(A_n,G)$, consisting of elements with finite support
(see \cite[Theorem 4.3]{Gobel1975} for precise formulation).

A further step ahead was
taken by Eda \cite{Eda1992} who defined \emph{non-commutatively slender} (or \emph{n-slender}) groups by
replacing the product $\ZZ^\NN$ and its projections to finite factors $\ZZ^n$ with the so-called free complete
product of cyclic groups, denoted $\HEG$, and its projections to free groups on $n$-letters $F_n$.
Group $\HEG$ has a convenient topological description as the fundamental group of the Hawaiian earring
(a countable one-point union of a sequence of circles whose radii decrease to zero), and the projections
to $F_n$ are induced for each $n$ by the projection of the Hawaiian earring to the first $n$ circles
(see \cite[Appendix]{Eda1992}). Eda proved that n-slender groups are torsion-free and are
preserved by taking subgroups, free products and restricted direct products. He also showed
that n-slender is a strictly stronger condition than slender, but that
for abelian groups the two concepts coincide.

Slenderness can be also seen as an automatic continuity condition in the sense of Dudley \cite{Dudley1961}.
Let us endow $\ZZ$ and $G$ with the discrete topology and $\ZZ^\NN$ with the product topology.
Then it is easy to see that $G$ is slender precisely when every homomorphism
from $\ZZ^\NN$ to $G$ has open kernel (in other words, it is continuous when $G$ is endowed with the discrete topology). This lead
Conner and Corson \cite{ConnerCorson2019} to define a discrete group $G$ to be
\emph{lch-slender} (resp. \emph{cm-slender}) if every homomorphism from a locally compact Hausdorff
(resp. completely metrizable) group $A$ to $G$ has open kernel. They showed that many interesting
groups, including free groups, free abelian groups and torsion-free word-hyperbolic groups,
are lch-slender and cm-slender and that for abelian groups we recover the usual slender groups.
Several types and properties of automatically continuous were studied in
\cite{CorsonKazachkov2020, Kryuchkov2007}.
Most recently, Corson and Varghese \cite{CorsonVarghese2020} strengthened the connection between
slender groups and automatic continuity by showing that a group is lch-slender if it is torsion-free
and it does not contain $\QQ$ or $p$-adic integers for any $p$.

\subsection{Our contribution}

A common feature of the above  examples is that the groups used to test
slenderness are related to inverse limits: $\ZZ^\NN$ can be represented as an inverse limit of
$\ZZ^n$, $\HEG$ is a dense subgroup of the inverse limit of free groups $F_n$, every connected
locally compact Hausdorff group is an inverse limit of Lie groups.
This (and some isolated results for maps from inverse limits) motivated our study of
groups that are slender with respect to inverse limits of groups.

Recall that every inverse sequence of groups
$$H_1\longleftarrow H_2 \longleftarrow H_3 \longleftarrow \cdots$$
gives rise to a limit group $\varprojlim H_i$, together with projections $p_i\colon \varprojlim H_i\to H_i$.
A group $G$ is \emph{inverse limit slender} (or \emph{lim-slender}) if for every inverse sequence
with surjective bonding maps, every homomorphism from $\varprojlim H_i$ to $G$ factors through some
projection $p_i$.

Our first result states that slenderness with respect to inverse limits can be checked on a single
test group. Let $\hF$ denote the inverse limit of an inverse sequence of finite rank free groups
(see Definition \ref{def:hF} for a formal description). This characterization allows us to prove that
inverse limit slender groups are preserved by several basic group constructions.

\textbf{Theorem 1.} (Theorem \ref{thm:lim-slender} and Theorem \ref{thm:closed under})\label{thm:1} \emph{
A group $G$ is inverse limit slender if, and only if, every homomorphism from $\hF$ to $G$ factors through
a projection to a free group of finite rank.\\[1mm]
The class of inverse limit slender groups is closed under taking subgroups and extensions,
and under directs sums and free products of arbitrary cardinality.}

The relation between different types of slender groups is given by the following theorem.

\textbf{Theorem 2.} (Theorem \ref{thm:relations between slender} and Corollary \ref{cor:equivalent for abelian})
\emph{The class of n-slender groups is contained in the class of inverse limit slender groups,
the class of inverse limit slender groups is contained in the class of slender groups, and
the class of slender groups is contained in the class of lch-slender groups. All containments are strict.\\[1mm]
Furthermore, among abelian groups the n-slender, inverse limit slender and slender groups coincide.}

Interestingly, all examples that distinguish various versions of slenderness are uncountable groups, so we
state the following

{\bf Conjecture:} \emph{For countable groups the classes of  slender groups, n-slender groups, inverse
limit slender groups, cm-slender groups and lch-slender groups coincide.}

Inverse limit slender groups bring to light a connection with an important concept from category
theory. In a category $\Cc$ that admits direct limits an object $X$ in $\Cc$ is said to be \emph{small}
if for every direct sequence with monomorphic bonding maps $(X^i)$ there exists a natural bijection
$\varinjlim \Hom(X,X^i)=\Hom(X,\varinjlim X^i)$. For example, finitely generated groups are
small object in the category of groups. Dually, a \emph{co-small} object in a category $\Cc$ that admits
inverse limits is an object $X$, such that for every inverse sequence $(X_i)$ with epimorphic bonding maps
there is a natural bijection $\varinjlim \Hom(X_i,X)=\Hom(\varprojlim X_i,X)$. Clearly,
a group $G$ is inverse limit slender if, and only if it is a co-small object in the category of groups.
In Propositions \ref{prop:slender as co-small} and \ref{prop:n-slender as co-small} we give analogous
characterizations for slender and n-slender groups.

One can obtain a higher dimensional analogue of the Hawaiian earring by taking for each $n$ the space
$\HH_n$ defined as a countable one-point union of $n$-dimensional spheres in $\RR^{n+1}$ whose radii
decrease to zero, i.e. the one-point compactification of a countable collection of open $n$-balls.
These spaces were first studied by Barratt and Milnor \cite{BarrattMilnor1962} who proved a remarkable
fact that, although $\HH_n$ is $n$-dimensional, there are infinitely many indices $i$ for which
the singular homology groups $H_i(\HH_n)$ are uncountably infinite.
As a step toward a precise description of these homology groups we prove the following\\[1mm]
\textbf{Theorem 3.} (Theorem \ref{thm: cotorsion})
\emph{For $l>n>1$, the image of the Hurewicz map $\pi_l(\HH_n)\to H_l(\HH_n)$ is cotorsion.
In particular, $H_{n+1}(\HH_n)$ is  cotorsion for every $n>1$.}

Together with Barratt and Milnor's results this proves that $H_3(\HH_2)$ is an uncountable cotorsion group. The proof of the above result is based on the ability to choose arbitrarily small representatives
of homology classes (see Lemma \ref{lem: isom}), so we believe that a more general result is true:

{\bf Conjecture:} \emph{For $l>n>1$, the groups $H_l(\HH_n)$ are cotorsion.}

As mentioned before, part of our motivation for this work is of topological nature.
In particular, our study of inverse limits of covering spaces
naturally requires good understanding of homomorphisms from inverse limits groups
(see Conner, Herfort, Kent and Pavesic \cite{ConnerHerfortKentPavesic1, ConnerHerfortKentPavesic2}
for further information). In the final part of this paper we present two applications of our work
on (shape) homotopy groups and (\v Cech) cohomology groups of Peano continua
(see Section \ref{sec:Applications to shape groups and Cech cohomology} for relevant definitions).

A well-known theorem of Shelah \cite{Shelah1988} states that the fundamental group of a Peano continuum
is either finite presented or uncountable and this implies that the shape fundamental group $\check\pi_1(X)$
of a Peano continuum $X$ may be very complicated. In fact, it is often uncountable and can even be locally free
with uncountable free subgroups. Nevertheless, we will show that $\check\pi_1(X)$ admits relatively
few homomorphisms into slender groups.

\textbf{Corollary 4.} (Corollary \ref{cor:hom check})
\emph{If $X$ is a Peano continuum and $G$ is a a countable inverse limit slender group, then there are only
countably many homomorphisms from the first shape group $\check \pi_1(X)$ to $G$.}

Our final  application is a Universal Coefficients Theorem that expresses \v Cech cohomology groups
with coefficients in a slender abelian group in terms of \v Cech homology groups. The theorem relies
on the equivalence between slender and inverse limit slender abelian groups.

\textbf{Theorem 5.} (Theorem \ref{thm: uct}) \emph{
If $G$ is a slender abelian group, then there is a short exact sequence }
$$ 0 \longrightarrow \lim\limits_{i \rightarrow\infty} \operatorname{Ext}\bigl(H_{n-1}(X_i),G\bigr)
\longrightarrow \check{H}^n(X;G) \longrightarrow \Hom(\check{H}_n(X), G)\longrightarrow 0.$$

\subsection{Outline}
In Section \ref{sec:Preliminaries on inverse and direct limits} we introduce the notation and terminology
of categorical limits. Section \ref{sec:Inverse limit slender groups} contains  the main
results on inverse limit slender groups and on their relation to other types of slender groups. In Section
\ref{sec:Slender as a co-small object} we examine categorical aspects of slender groups, viewed
as co-small objects in corresponding categories. In Section \ref{sec:Homology of Barratt-Milnor examples}
we study algebraic compactness and
the cotorsion property of homology groups of Barratt-Milnor spaces. Finally, in Section
\ref{sec:Applications to shape groups and Cech cohomology} we give some applications of inverse limit
slender groups to shape fundamental groups and \v Cech cohomology groups.

\section{Preliminaries on inverse and direct limits}
\label{sec:Preliminaries on inverse and direct limits}

In this section we set the basic notation on direct and inverse sequences and their limits, and refer
to Geoghegan \cite[Section 11.2]{Geoghegan2008} for a more detailed exposition.

\begin{defn}  Let $\Cc$ be a category. An \emph{inverse sequence} $(X_i, p_i^j)$ in $\Cc$ consists
of a set of objects $\{X_i\mid i\in\NN\}$ in $\Cc$ and a set of morphisms (called \emph{bonding maps})
$\{p_i^j\colon X_j\to X_i\mid i<j\}$ in $\Cc$, satisfying $p_i^j\, p_j^k=p_i^k$ whenever $i<j<k$.
We often depict an inverse system as a diagram
$$X_1\stackrel{p_1^2}{\longleftarrow} X_2\stackrel{p_2^3}{\longleftarrow} X_3 \longleftarrow \cdots$$
In the categories that are of interest to us (sets, pointed sets, abelian groups, groups, topological spaces,
pointed topological spaces, topological groups)
every inverse sequence $(X_i, p_i^j)$ has an \emph{inverse limit}, i.e. an object
$\varprojlim (X_i, p_i^j)$ in $\Cc$, together with morphisms $p_i\colon \varprojlim (X_i, p_i^j)\to X_i$ in $\Cc$
called \emph{projections} satisfying $p_i=p_i^j p_j$ whenever $i<j$ and the usual universal property
(see \cite[p.235]{Geoghegan2008}). In fact, in the above mentioned categories we have standard models
for the inverse limit whose underlying set is given
$$\varprojlim ( X_j, p_i^j)= \Bigl\{(x_i)\in \prod_{i\in \NN}  X_i \mid x_i=p_i^j(x_j) \Bigr\}$$
and projections $p_i$ are obtained by projecting each sequence to its $i$-th component. For categories
with algebraic or topological structure
the underlying sets are equipped with component-wise algebraic operations or/and product topology.
\end{defn}

We next define a special,  well-behaved class of inverse system.

\begin{defn}
An inverse sequence  $\{ X_j, p_i^j\}$ satisfies \emph{Mittag-Leffler} condition (or \emph{is an ML-sequence})
if for every $i$ there exists $j > i$ such that $p_i^k(X_k) = p_i^j(X_j)$ for every $k \geq j$. 
\end{defn}

Clearly, every inverse sequence with surjective bonding maps is an ML-sequence, On the other hand,
a sequence, whose bonding maps are injective is an ML-sequence only if it is eventually constant.

Inverse sequences and their limits have categorical duals in the form of direct sequences and direct limits.

\begin{defn}  A \emph{direct sequence} $(X^i, u_i^j)$ in a category $\Cc$ consists
of objects $\{X^i\mid i\in\NN\}$ and bonding maps $\{u_i^j\colon X^i\to X^j\mid i<j\}$ in $\Cc$, satisfying $u_j^k\, u_i^j=u_i^k$ whenever $i<j<k$. The corresponding diagram is
$$X^1\stackrel{u_1^2}{\longrightarrow} X^2\stackrel{u_2^3}{\longrightarrow} X^3 \longrightarrow \cdots$$
A \emph{direct limit} of such sequence is an object
$\varinjlim (X^i, u_i^j)$ in $\Cc$, together with morphisms $u_i\colon X^i\to\varinjlim (X^i, u_i^j)$,
satisfying  $u_j=u_i u_i^j$ whenever $i<j$ and the usual universal property
(see \cite[p.238]{Geoghegan2008}). In all of the above-mentioned categories a model for the direct limit
can be obtained by taking a disjoint union of underlying sets, modulo certain equivalence relation,
and endowing it with a suitable algebraic or topological structure:
$$\varinjlim (X^i, u_i^j)= \coprod_{i\in \NN}  X^i\big/\sim,$$
where $\sim$ is the equivalence relation, generated by  $x_i\sim u_i^j(x_i)$ for all $i\in\NN$ and $x_i\in X^i$.
In particular, if all bonding maps in a direct sequence are injective, then its direct limit is simply
the union of its terms.
\end{defn}

The following special case will be of central importance for our discussion.

\begin{defn}\label{def:hF}
Let $\{x_n\mid n\in\NN\}$ be a countable set, and for every $i\in\NN$ let $F_i$ be
the free group on the alphabet $\{x_1, \cdots, x_i\}$. For $i<j$ there are natural inclusions
$u_i^j\colon F_i\to  F_j$ and projections $p_i^j\colon F_j\to F_i$, which are related by
$p_i^j u_i^j=1_{F_i}$. Thus we obtain a direct system $(F_i,u_i^j)$ and an
inverse system $(F_i,p_i^j)$. The direct limit of the former is $F_\infty:=\varinjlim (F_i,u_i^j)$, the free
group generated by $\{x_n\mid n\in\NN\}$, while the inverse limit of the latter is
$\hF=\varprojlim (F_i,p_i^j)$, which is sometimes called \emph{unrestricted free product} of infinite
cyclic groups. In addition, we will denote by $\hFk{k}$ the subgroup of $\hF$ obtained as inverse limit
of free groups on the subset $\{x_n\mid n>k\}$, so that $\hFk{0} =\hF$ and
$\hFk{k}\subseteq \ker p_k$.
\end{defn}
Note that, the relation between maps $u_i^j$ and $p_i^j$ implies that the group $F_\infty$ can be
naturally
identified with a subgroup of $\hF$. Even more, if we endow $\hF$ with the product topology, then
$F_\infty$ is a countable dense subgroup of $\hF$.

We conclude this section by recalling certain commutativity relations between direct and inverse limits.
Let $\varprojlim X_i$ be the limit of an inverse sequence $(X_i,p_i^j)$ in a category $\Cc$.
We can associate to every morphism $f\colon A\to \varprojlim X_i$ the sequence
$(p_if)\in \prod \Hom(A,X_i)$. One can easily check that this actually determines a natural bijection
$$\Hom(A,\varprojlim X_i)\stackrel{\cong}{\longrightarrow}\varprojlim \Hom(A,X_i).$$
Dually, by associating to a morphism $f\colon\varinjlim X^i\to A$ the sequence of morphisms
$(f u_i)\in \prod \Hom(X^i,A)$, we obtain a natural bijection
$$\Hom(\varinjlim X^i, A)\stackrel{\cong}{\longrightarrow}\varprojlim \Hom(X^i,A).$$

Analogous relations for morphisms
into direct limits and from inverse limits are more complicated.
Consider a direct sequence $(X^i,u_i^j)$ in a category $\Cc$.
For every object $A$ in $\Cc$ we may apply the covariant functor $\Hom(A,-)$ and obtain a direct sequence
of sets $\big(\Hom(A,X^i),(u_i^j)_*\big)$. Every element of $\varinjlim\big(\Hom(A,X^i),(u_i^j)_*\big)$
is represented by some $h\in\Hom(A,X^i)$. It is easy to check that the correspondence
$h\mapsto u_i\circ h\in \Hom(A, \varinjlim (X^i,u_i^j))$ defines a function
$$\Delta_A\colon \varinjlim\big(\Hom(A,X^i),(u_i^j)_*\big)\longrightarrow
\Hom(A,\varinjlim (X^i,u_i^j)).$$
Note that $u$ is injective if bonding maps $u_i^j$ are monomorphisms in $\Cc$.

Dually, if $(X_i,p_i^j)$ is an inverse sequence in $\Cc$, then for every object $A$ in $\Cc$ we have
a  direct sequence of sets $\big(\Hom(X_i,A),(p_i^j)^*\big)$. By assigning to every $h\in\Hom(X_i,A)$
the element $h\circ p_i\in \Hom(X_i,A)$ we obtain a function
$$\nabla_A\colon \varinjlim\big(\Hom(X_i,A),(p_i^j)^*\big)\longrightarrow
\Hom(\varprojlim (X_i,p_i^j),A),$$
which is injective if bonding maps $p_i^j$ are epimorphisms in $\Cc$.

\section{Inverse limit slender groups}
\label{sec:Inverse limit slender groups}

The main objective of this section is to describe the properties of inverse limit slender groups and relate
it to other types of slender groups. We begin by recalling the classical definition of slender groups
and by showing that the integers are a slender group.

\begin{defn}[cf. Fuchs {\cite[Ch. XIII]{Fuchs2015}}]
Let $\ZZ^\NN$ be the countable product of infinite cyclic groups. A group $G$ is \emph{\slender} if every
homomorphism $\varphi: \ZZ^\NN\to G$ can be factored through some $\pr_n\colon\ZZ^\NN\to \ZZ^n$
(projection to the first $n$ factors of the product), i.e., if there exists
$n\in \NN$ and a homomorphism $\varphi'\colon\ZZ^n\to G$ such that the following diagram commutes
$$\xymatrix{
\ZZ^\NN \ar[rr]^\varphi \ar[dr]_{\mathrm{pr}_n} & & G\\
& \ZZ^n \ar[ur]_{\varphi'}}
$$
\end{defn}

For the sake of completeness, we give a short, self-contained proof of the well-known fact that $\mathbb Z$
is slender.

\begin{prop}
 $\ZZ$ is slender.
\end{prop}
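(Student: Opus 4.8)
The plan is to show directly that every homomorphism $\varphi\colon\ZZ^\NN\to\ZZ$ vanishes on all but finitely many of the standard generators $e_n=(0,\dots,0,1,0,\dots)$, and then to leverage divisibility facts about $\ZZ^\NN$ to conclude that $\varphi$ factors through a finite projection. First I would record the two elementary observations that make the argument work. Observation one: for any choice of integers $(a_n)$, the element $(a_n)_n\in\ZZ^\NN$ can be written as a finite sum of the $e_n$ plus a ``tail'' lying in the subgroup $\prod_{n>k}\ZZ\subseteq\ZZ^\NN$, so it suffices to control $\varphi$ on the tails and on the individual $e_n$. Observation two: the tail subgroups $\prod_{n\ge k}\ZZ$ are, for divisibility purposes, very flexible — given any sequence $(b_n)$ with $b_n$ divisible by arbitrarily high powers of a fixed prime (indeed by $n!$), the element $\sum b_n e_n$ (a well-defined point of $\ZZ^\NN$) is divisible in $\ZZ^\NN$ by $N!$ for every $N$, modulo a finite correction term.

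The first key step is to prove that $\varphi(e_n)=0$ for all but finitely many $n$. Suppose not; then there is an infinite set $S$ with $\varphi(e_n)\ne 0$ for $n\in S$. Choose inductively integers $b_n$ (for $n\in S$) so that $b_n$ is divisible by $n!$ and $|b_n\varphi(e_n)|$ grows faster than the partial sums of the previously chosen terms — a standard ``fast-growing weights'' trick. Consider $x=\sum_{n\in S}b_n e_n\in\ZZ^\NN$. For each $N$, split $x=x'+x''$ where $x'$ is the finite part with indices $\le N$ and $x''$ has all its nonzero entries divisible by $N!$; then $x''$ is divisible by $N!$ in $\ZZ^\NN$ (divide each coordinate), so $N!\mid\varphi(x'')=\varphi(x)-\varphi(x')$. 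Since this holds for every $N$, $\varphi(x)$ is divisible by arbitrarily large integers, forcing $\varphi(x)=0$; but the growth condition on the $b_n$ makes $\varphi(x)=\sum b_n\varphi(e_n)$ (once we justify that $\varphi$ respects this "convergent" sum up to the divisible tail) impossible to be zero — contradiction. This is the step I expect to be the main obstacle: $\varphi$ is only assumed additive, not continuous, so one cannot naively pass $\varphi$ through an infinite sum. The fix is precisely the splitting trick above — never evaluate $\varphi$ on an infinite sum, only on finite parts plus a highly divisible tail whose image is then pinned down by divisibility, so that the divergence of the weighted partial sums is genuinely incompatible with $\varphi(x)$ being a fixed integer.

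Once we know $\varphi(e_n)=0$ for $n>k$, the second key step is to show $\varphi$ actually vanishes on the whole tail $T_k:=\prod_{n>k}\ZZ$. Given $y=(0,\dots,0,c_{k+1},c_{k+2},\dots)\in T_k$, I would apply the same divisibility argument one more time: for each $N>k$ write $y=y'+y''$ with $y'\in\bigoplus_{k<n\le N}\ZZ$ (on which $\varphi$ vanishes, since $y'$ is a finite combination of the $e_n$ with $n>k$) and $y''$ supported on indices $>N$; choosing $N$ large does not by itself make $y''$ divisible, so instead one runs the argument against a cleverly rescaled auxiliary element, or more simply: $T_k\cong\ZZ^\NN$, $\varphi|_{T_k}$ kills all standard generators of $T_k$, and by the first step applied inside $T_k$ there are again only finitely many generators on which it could be nonzero — but there are none, so by an induction/pigeonhole on the support one gets $\varphi|_{T_k}=0$. (Alternatively, cite that a homomorphism $\ZZ^\NN\to\ZZ$ killing every $e_n$ is zero, which follows from the same divisibility mechanism since any $y\in\ZZ^\NN$ equals a finite part plus an $N!$-divisible part for suitable re-coordinatization.) Having shown $\varphi|_{T_k}=0$, every $x\in\ZZ^\NN$ decomposes as $\pr_k(x)$-part plus an element of $T_k$, so $\varphi$ factors as $\varphi=\varphi'\circ\pr_k$ with $\varphi'\colon\ZZ^k\to\ZZ$ defined by $\varphi'(a_1,\dots,a_k)=\sum_{n=1}^k a_n\varphi(e_n)$. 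This exhibits the required factorization and completes the proof that $\ZZ$ is slender.
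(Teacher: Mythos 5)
Your overall strategy (the classical two-step Specker argument: first kill almost all generators, then kill a whole tail) is viable, but both key steps as written have genuine gaps. In Step 1 the ``fast-growing weights'' condition does not deliver the contradiction. What you actually extract from the splitting is the family of congruences $\varphi(x)\equiv S_N \pmod{(N+1)!}$, where $S_N=\sum_{n\le N}b_n\varphi(\mathbf e_n)$, and you must choose the $b_n$ so that \emph{no single integer} satisfies all of them. Requiring $n!\mid b_n$ and $|b_n\varphi(\mathbf e_n)|$ to dominate $|S_{N-1}|$ is not enough: take $\varphi(\mathbf e_N)=N$ and $b_N=N!$; then $|b_N\varphi(\mathbf e_N)|=N\cdot N!$ vastly exceeds $S_{N-1}=N!$, yet $S_N=(N+1)!$, so $c=0$ satisfies every congruence and no contradiction is reached. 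What is needed is the opposite kind of control --- the partial sums must stay \emph{small} relative to the moduli (e.g.\ $0<S_N<(N+1)!/2$, with $S_N$ strictly increasing), which cannot always be arranged with the moduli $n!$ when $|\varphi(\mathbf e_n)|$ is large; one must either choose the divisibility sequence adaptively in terms of the values $\varphi(\mathbf e_n)$, or do what the paper does and sidestep the bookkeeping entirely: choose $a_{i+1}$ as a multiple of $a_i$ that fails to divide the relevant value, and then use the pigeonhole principle on the uncountable set $\prod_i\{0,a_i\}$ mapping into the countable group $\ZZ$ to produce two sequences with equal image whose difference violates the divisibility you built in.

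Step 2 is also not closed. Your main argument is circular: applying Step 1 inside $T_k\cong\ZZ^\NN$ only re-proves that $\varphi|_{T_k}$ kills almost all (indeed all) standard generators, which says nothing about elements of infinite support. Your fallback claim --- that every $y\in\ZZ^\NN$ is a finite part plus an $N!$-divisible part after re-coordinatization --- is false: $(1,1,1,\dots)$ admits no such decomposition. The fact you want (a homomorphism $\ZZ^\NN\to\ZZ$ killing every $\mathbf e_n$ is zero) is true, but it requires the pushforward construction that is the first paragraph of the paper's proof: if $\varphi$ is nonzero on every tail, pick $b_i$ supported on indices $>i$ with $\varphi(b_i)\ne 1$, check that $\psi\bigl((x_i)\bigr)=\sum_i x_ib_i$ is a well-defined homomorphism $\ZZ^\NN\to\ZZ^\NN$ (each coordinate of the sum is finite because the supports march off to infinity), and apply Step 1 to $\varphi\circ\psi$, which kills no generator. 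Note that the paper performs this reduction \emph{first}, so that a single argument handles both of your cases at once; you should either adopt that structure or spell out the $\psi$ construction explicitly where you currently write ``cleverly rescaled auxiliary element.''
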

\begin{proof}
If there is a homomorphism $\varphi: \ZZ^{\NN} \to \ZZ$ that does not factor through any finite projection,
then for every $i$ there exists $b_i\in \prod_{n\leq i}\{0\} \times \prod_{n>i} \ZZ$ such that
$\varphi(b_i)\neq 0$. One can easily check that the formula $\psi\bigl((x_i)\bigr):= \sum_i x_ib_i$
defines a homomorphism $\psi: \ZZ^{\NN} \to\ZZ^{\NN}$, such that
 $\varphi(\psi(\mathbf e_n)) \neq 0$ for all $n$.

Let $a_1 = 1$, and define $a_2, a_3, \cdots$ recursively by letting $a_{i+1}$ be the minimal multiple
of $a_i$ that does not divide $\varphi(\psi(a_i\mathbf{e}_i))$. The set $\bigl\{(x_i)\in\ZZ^\NN \mid x_i\in
\{0, a_i\}\bigr\}$ is uncountable, so there exist sequences $(x_i) \neq (y_i)$ such that
$\varphi\bigl(\psi\bigl((x_i)\bigr)\bigr) = \varphi\bigl(\psi\bigl((y_i)\bigr)\bigr)$.
Let $n$ be the minimal index for which $x_n \neq y_n$.  Then $(x_i) - (y_i) \pm a_n\mathbf e_n$ is divisible by
$a_{n+1}$ but $\varphi\bigl(\psi\bigl((x_i)- (y_i)\pm a_n\mathbf e_n\bigr)\bigr)= \varphi\bigl(
\psi(a_n\mathbf e_n)\bigr)$ is not, a contradiction.
\end{proof}

One can clearly recognize a recurring pattern that will appear throughout the paper: a group is slender
if it cannot certain infinitary operations. Let us now define a type of slenderness that is
of principal interest to this work.

\begin{defn}
A group $G$ is \emph{inverse limit-slender} (occasionally abbreviated to \emph{\limslender}) if for
every inverse sequence of groups $(H_i,p_i^j)$ satisfying the
Mittag-Leffler condition and for every homomorphism $\varphi\colon \varprojlim (H_i,p_i^j)\to G$ there
exists a homomorphism $\overline\varphi\colon H_i\to G$, such that $\varphi=\overline\varphi\ p_i$.
In other words, every homomorphism from $\varprojlim H_i$ to $G$ factors through some term in
the sequence as in the following diagram
$$\xymatrix{
\varprojlim H_i \ar[rr]^\varphi \ar[dr]_{p_i} & & G\\
& H_i \ar[ur]_{\overline\varphi}}
$$
\end{defn}

The following example shows  that without the assumption that the inverse sequences is Mittag-Leffler
even the integers as the archetypal slender group would not satisfy the defining property for
a \limslender\  group.

\begin{example}\label{exm: example}
For each prime $p$, let $\mathbb Q_{(p)} = \{a/b\in \mathbb Q \mid p\nmid b\}$ and consider the following
inverse sequence of groups:  $H_i = \bigcap\limits_{p\leq i} \mathbb Q_{(p)}$ and
$p_i^j: G_j \to G_i$ is the inclusion.  Then $\varprojlim H_i = \mathbb Z$, but the identity map
$\varprojlim H_i \to \ZZ$ clearly cannot factor through any $H_i$. In general, if the sequence is
not Mittag-Leffler, then its terms  can have relations that do not hold in the inverse
limit. In our example every element of every factor can be divided by arbitrarily large integers $n$,
which is not the case in $\ZZ$.
\end{example}

To proceed, we need the following technical lemma. Recall from definition \ref{def:hF} that the group
$\hF$ contains the free group $F_\infty$ generated by elements $\{x_n\mid n\in\NN\}$.

\begin{lem}\label{lem: not slender}
Let $(H_i, p_i^j)$ be an inverse sequence with surjective bonding maps.  If a homomorphism
$\varphi: \varprojlim H_i \to G$ does  not factor through any projection $p_i$, then there exists a homomorphism
$\psi: \hF \to \varprojlim H_i$ such that $\varphi(\psi(x_i))\ne 1$ for all $i$.
In particular, $\varphi\circ \psi\colon\hF\to G$ does not factor through any projection $\hF\to F_i$.
\end{lem}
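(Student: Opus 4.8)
The plan is to realize $\psi$ as the ``substitution'' homomorphism $x_i\mapsto g_i$, where the $g_i\in\varprojlim H_i$ are elements that detect the failure of $\varphi$ to factor, and to make sense of this substitution on all of $\hF$ --- not merely on the dense free subgroup $F_\infty$ --- by routing the construction through the universal property of the inverse limit. To choose the $g_i$: since the bonding maps are surjective, each projection $p_i\colon\varprojlim H_i\to H_i$ is surjective, so if $\ker p_i$ were contained in $\ker\varphi$ then $\varphi$ would factor through $p_i$, contrary to hypothesis; hence for every $i$ we may pick $g_i\in\ker p_i$ with $\varphi(g_i)\ne 1$. The one elementary fact that drives everything is that $\ker p_i\subseteq\ker p_{i'}$ whenever $i'\le i$ (because $p_{i'}=p_{i'}^{i}p_i$), so $p_{i'}(g_i)=1$ for all $i'\le i$: on the $i'$-th finite stage only the finitely many elements $g_1,\dots,g_{i'-1}$ can remain nontrivial.

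Since $\varprojlim H_i$ is an inverse limit, a homomorphism $\psi\colon\hF\to\varprojlim H_i$ amounts to a compatible family $\psi_i\colon\hF\to H_i$ with $p_i^{i+1}\psi_{i+1}=\psi_i$. I would take $\psi_i$ to be the composite of the canonical projection $\pr_{i-1}\colon\hF\to F_{i-1}$ with the homomorphism $F_{i-1}\to H_i$ determined by $x_n\mapsto p_i(g_n)$ for $1\le n\le i-1$ (well defined because $F_{i-1}$ is free; here $F_0$ is trivial, so $\psi_1=1$). Checking $p_i^{i+1}\psi_{i+1}=\psi_i$ is then a short diagram chase using only that $p_i(g_i)=1$ together with the compatibility of the projections of $\hF$ and of $\varprojlim H_i$ with the respective bonding maps. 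For the resulting $\psi$ one then checks, for every $m$ and every $i$, that $p_i(\psi(x_m))=\psi_i(x_m)=p_i(g_m)$: if $m\le i-1$ then $\pr_{i-1}$ sends $x_m$ to $x_m$, while if $m\ge i$ then $\pr_{i-1}$ sends $x_m$ to $1$ and simultaneously $p_i(g_m)=1$ by the previous paragraph. As this holds for all $i$, $\psi(x_m)=g_m$, and hence $\varphi(\psi(x_m))=\varphi(g_m)\ne 1$ for every $m$.

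Finally, the ``in particular'' clause is immediate: the kernel of the projection $\hF\to F_k$ contains $x_{k+1}$, so any homomorphism factoring through it would send $x_{k+1}$, and therefore $\varphi(\psi(x_{k+1}))$, to $1$, which is impossible. I expect the main obstacle to be conceptual rather than computational: one must be confident that the naive prescription $x_i\mapsto g_i$ genuinely extends from $F_\infty$ to the far larger group $\hF$, and the sole mechanism that makes this work is the ``convergence'' statement $p_{i'}(g_i)=1$ for $i'\le i$, which says that arbitrary (infinite) words in the $g_i$ converge coordinatewise in $\varprojlim H_i$. Passing through the universal property via the family $(\psi_i)$ is precisely the device that turns this heuristic into a rigorous argument; what then remains is only careful bookkeeping of the index shift between ``$g_i\in\ker p_i$'' and the free group $F_{i-1}$ on $x_1,\dots,x_{i-1}$.
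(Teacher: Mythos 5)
Your proof is correct and follows essentially the same route as the paper: both arguments choose witnesses $g_i\in\ker p_i\setminus\ker\varphi$ (possible because surjectivity of the projections makes ``$\varphi$ factors through $p_i$'' equivalent to $\ker p_i\subseteq\ker\varphi$) and then assemble $\psi$ from a compatible family of homomorphisms of finite-rank free groups into the finite stages, concluding $\psi(x_i)=g_i$. The only difference is bookkeeping: the paper arranges compatibility by passing to a cofinal subsequence $H_{k_i}$ with $u_{i+1}\in\ker p_{k_i}$, whereas you achieve it with the index shift $F_{i-1}\to H_i$, which is marginally cleaner.
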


\begin{proof}
Observe that the kernels $\ker (p_i\colon\varprojlim H_i\to H_i)$ form a decreasing sequence of nested normal
subgroups of $\varprojlim H_i$ and that $\varphi$ factors through $p_i$ if, and only if,
$\ker(p_i)\subseteq \ker(\varphi)$. Thus, by our assumptions, $\ker p_i-\ker\varphi\ne \emptyset$ for all $i$.

Pick $u_1\in \ker(p_1)- \ker(\varphi)$.  Since $u_1\ne 1$ (because $\varphi(u_1)\ne 1$), there exists a
minimal index $k_1$, such that $p_{k_1}(u_1) \neq 1$. Pick $u_2\in \ker(p_{k_1})- \ker(\varphi)$. As before,
there exists a minimal $k_2$, such that $p_{k_2}(u_2) \neq 1$. By continuing the process we obtain
a sequence of elements $u_i\in\ker p_{k_{i-1}}-\ker\varphi$, $i=1,2,\ldots$, satisfying $p_{k_i}(u_i)\ne 1$.

For each $n$, define $\psi_n: F_n \to H_{k_n}$ by $\psi_n(x_i) = p_{k_n}(u_i)$ for $i\in\{1, \cdots, n\}$.
It is then easy to check that the diagrams
$$\xymatrix{
F_n\ar[d] \ar[r]^{\psi_n} & H_{k_n} \ar[d]^{p^{k_n}_{k_{n-1}}}\\
F_{n-1} \ar[r]_{\psi_{n-1}} & H_{k_{n-1}}
}$$
commute for every $n$, so we get an induced map $\psi\colon \hF \to \varprojlim H_{k_i}=\varprojlim H_i$,
with the property that $\varphi\circ \psi (x_i)  =\varphi(u_i) \neq 1$.
\end{proof}

We have the following characterization of inverse limit slender groups

\begin{thm}\label{thm:lim-slender}
The following statements for a group $G$ are equivalent.
\begin{enumerate}
  \item\label{inv1} $G$ is inverse limit slender.
  \item\label{inv2} Every homomorphism $\varphi\colon \hF\to G$ factors through some projection $\hF\to F_i$.
  \item For every homomorphism $\varphi\colon \hF\to G$, there exists $i \in\NN$, such that $\varphi(x_j)=1$
  for $j>i$.
  \item\label{inv3} For every (non-necessarily Mittag-Leffler) inverse sequence $(H_i,p_i^j)$ and
  every homomorphism $\varphi\colon \varprojlim H_i\to G$ there exists an index $i$ such that $\varphi$ factors
  through the projection $\varprojlim H_i\to p_i(\varprojlim H_i)$.
\end{enumerate}
\end{thm}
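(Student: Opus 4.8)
The plan is to prove the cycle $(1)\Rightarrow(2)\Rightarrow(3)\Rightarrow(4)\Rightarrow(1)$. Two of these arrows are formal. For $(1)\Rightarrow(2)$ I would observe that $\hF=\varprojlim(F_i,p_i^j)$ and that the bonding projections $p_i^j$ are surjective, so this sequence is Mittag-Leffler, and applying inverse limit slenderness to it yields $(2)$ verbatim. For $(2)\Rightarrow(3)$ I would use that, under the identification of $F_\infty$ with a subgroup of $\hF$, the generator $x_j$ corresponds to the sequence whose $k$-th coordinate equals $x_j$ for $k\ge j$ and is trivial for $k<j$; hence $p_i(x_j)=1$ whenever $j>i$, so if $\varphi=\overline\varphi\,p_i$ then $\varphi(x_j)=\overline\varphi(1)=1$ for all $j>i$.

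The substantive step is $(3)\Rightarrow(4)$, which I would prove by contraposition using Lemma~\ref{lem: not slender}. Suppose $(4)$ fails: there is an inverse sequence $(H_i,p_i^j)$, not assumed Mittag-Leffler, and a homomorphism $\varphi\colon\varprojlim_k H_k\to G$ which does not factor through the projection $\varprojlim_k H_k\to p_i(\varprojlim_k H_k)$ for any $i$. Put $K_i:=p_i(\varprojlim_k H_k)\subseteq H_i$ and restrict the bonding maps to the $K_i$. The identity $p_i^j(K_j)=p_i^j\bigl(p_j(\varprojlim_k H_k)\bigr)=p_i(\varprojlim_k H_k)=K_i$ shows that the sequence $(K_i)$ has surjective bonding maps, and $\varprojlim_i K_i=\varprojlim_i H_i$ with the same projections (now landing in $K_i$). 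The failure of $(4)$ says precisely that $\varphi$ does not factor through any of these surjective projections, so Lemma~\ref{lem: not slender} supplies a homomorphism $\psi\colon\hF\to\varprojlim_i H_i$ with $\varphi(\psi(x_i))\ne1$ for every $i$. Then $\varphi\circ\psi\colon\hF\to G$ has $\varphi\psi(x_j)\ne1$ for all $j$, so no index $i$ satisfies $\varphi\psi(x_j)=1$ for all $j>i$, i.e.\ $(3)$ fails.

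For $(4)\Rightarrow(1)$ I would use the standard behaviour of Mittag-Leffler sequences: if $(H_i,p_i^j)$ is an ML sequence then $p_i(\varprojlim_k H_k)=\bigcap_{k\ge i}p_i^k(H_k)$, and for each $i$ this stable image is actually realised, i.e.\ there is $j\ge i$ with $p_i^j(H_j)=p_i(\varprojlim_k H_k)$. (One inclusion is immediate; the reverse is proved by lifting a point of the intersection one term at a time through the eventually constant images, which is where ML enters.) Given $\varphi\colon\varprojlim_i H_i\to G$, statement $(4)$ gives $\varphi=\overline\varphi\circ p_i$ with $\overline\varphi$ defined on $p_i(\varprojlim_k H_k)=p_i^j(H_j)$; then $\overline\varphi\circ p_i^j\colon H_j\to G$ is defined on all of $H_j$ and $(\overline\varphi\circ p_i^j)\circ p_j=\overline\varphi\circ p_i=\varphi$, so $\varphi$ factors through $p_j$ and $G$ is inverse limit slender.

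I expect the only real difficulty to be $(3)\Rightarrow(4)$: the key is to recognise that replacing $H_i$ by the images $K_i=p_i(\varprojlim_k H_k)$ turns an arbitrary inverse sequence into one with surjective bonding maps without altering the inverse limit or its projections, which is exactly what makes Lemma~\ref{lem: not slender}---and its diagonal construction of $\psi$ from elements $u_i\in\ker p_{k_{i-1}}\setminus\ker\varphi$---applicable in the generality of $(4)$. A secondary technical nuisance, dealt with in $(4)\Rightarrow(1)$, is the codomain mismatch: $p_i(\varprojlim_k H_k)$ may be a proper subgroup of $H_i$ and a homomorphism defined only on it need not extend to $H_i$, forcing the passage to a later index $j$ where the stable image already appears as $p_i^j(H_j)$.
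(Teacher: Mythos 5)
Your proposal is correct and follows essentially the same route as the paper: the same cycle $(1)\Rightarrow(2)\Rightarrow(3)\Rightarrow(4)\Rightarrow(1)$, with $(3)\Rightarrow(4)$ obtained by replacing $H_i$ with the images $p_i(\varprojlim H_k)$ so that Lemma~\ref{lem: not slender} applies, and $(4)\Rightarrow(1)$ using the Mittag-Leffler identification of $p_i(\varprojlim H_k)$ with a realised image $p_i^j(H_j)$. You merely make explicit a few details the paper leaves implicit (surjectivity of the restricted bonding maps, the equality of the two inverse limits, and the stable-image fact), which is fine.
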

\begin{proof}
(1) implies (2) because the inverse sequence that defines $\hF$ is Mittag-Leffler.
Also, (2) trivially imply (3).

That (3) implies (4)
follows from lemma \ref{lem: not slender}. In fact, $\lim H_i$ can be viewed as the inverse limit of the
sequence $(p_i(\varprojlim H_i),p_i^j)$ with surjective bonding maps. If there exists a homomorphism
$\varphi\colon\varprojlim H_i\to G$ that does not factor through any projection to $p_i(\varprojlim H_i)$,
then by lemma \ref{lem: not slender} there exists a homomorphism from $\hF$ that does not satisfy
condition (3).

Finally, to show that (4) implies (1) consider a Mittag-Leffler sequence $(H_i, p_i^j)$ and a homomorphism
$\varphi\colon \varprojlim H_i\to G$. By (3) $\varphi$ factors through the projection to
some $p_i(\varprojlim H_i)$. By the Mittag-Leffler property there exists some $j>i$, such that
$p_i^j(H_j) =p_i(\varprojlim H_i)$ but that means that $\varphi$ factors through the projection
$p_j$.
\end{proof}







We now show that inverse limit slenderness is preserved by several basic operations.

\begin{thm}
\label{thm:closed under}
The class of inverse limit slender groups is closed under taking subgroups, extensions, directs sums and free
products.
\end{thm}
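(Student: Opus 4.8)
The plan is to verify the four closure properties separately, using the characterization from Theorem~\ref{thm:lim-slender} throughout: a group $G$ is inverse limit slender if and only if every homomorphism $\varphi\colon\hF\to G$ sends all but finitely many of the generators $x_n$ to the identity. This reduces everything to statements about a single test group $\hF$ and its generators.

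\textbf{Subgroups.} If $H\leq G$ and $G$ is inverse limit slender, any homomorphism $\varphi\colon\hF\to H$ is in particular a homomorphism into $G$, so $\varphi(x_n)=1$ for $n$ large; hence $H$ is inverse limit slender. This is immediate.

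\textbf{Extensions.} Suppose $1\to K\to G\to Q\to 1$ is exact with $K$ and $Q$ inverse limit slender, and let $\varphi\colon\hF\to G$ be a homomorphism. Composing with $G\to Q$ gives a homomorphism $\hF\to Q$, so there is $k$ with $\varphi(x_n)\in K$ for all $n>k$. The restriction of $\varphi$ to $\hFk{k}$ then lands in $K$; but $\hFk{k}$ is itself the inverse limit of free groups on $\{x_n\mid n>k\}$, i.e.\ a copy of $\hF$, so by inverse limit slenderness of $K$ there is $m>k$ with $\varphi(x_n)=1$ for $n>m$. Hence $G$ is inverse limit slender. The only subtlety here is the identification of $\hFk{k}$ with a copy of $\hF$ on the shifted generating set, which is built into Definition~\ref{def:hF}.

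\textbf{Free products and direct sums.} Let $G=\ast_{\lambda\in\Lambda}G_\lambda$ with each $G_\lambda$ inverse limit slender, and let $\varphi\colon\hF\to G$ be a homomorphism; the direct sum case is handled identically (or deduced by composing with the natural surjection from the free product, noting that a quotient argument is not available, so one argues directly). The key point is a normal-form/length argument: define, for $w\in G$, its syllable length $\ell(w)$ with respect to the free product decomposition. I would show that the function $n\mapsto\ell(\varphi(x_n))$ must be bounded, and in fact that only finitely many $x_n$ can have $\varphi(x_n)\neq 1$, by the same kind of ``infinite divisibility/telescoping'' obstruction used in Lemma~\ref{lem: not slender}: if infinitely many generators had nontrivial image, one could build a homomorphism $\psi\colon\hF\to\hF$ (using the tail subgroups $\hFk{k}$, which map into $\hF$ because $\hF$ admits infinite products of its own generators) producing an element of $\hF$ whose image under $\varphi$ cannot lie in any single free factor and cannot be a finite word either, contradicting the fact that every element of a free product is a finite word. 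Once we know $\varphi$ has image in a \emph{single} factor $G_\lambda$ after discarding finitely many generators — more precisely, that the ``tail'' $\varphi|_{\hFk{k}}$ factors through one $G_\lambda$ — inverse limit slenderness of that $G_\lambda$ finishes the argument exactly as in the extension case.

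\textbf{Main obstacle.} The subgroup and extension cases are routine. The genuine work is the free product (and direct sum) case, and specifically the claim that the image of the tail of $\hF$ cannot ``spread out'' over infinitely many free factors or produce infinitely long words: this requires a careful argument showing that an element $\varphi(h)$ for $h\in\hFk{k}$ expressed as an infinite product $x_{n_1}x_{n_2}\cdots$ with each $\varphi(x_{n_j})$ nontrivial would be forced to have unbounded syllable length in the free product, which is impossible. Handling the commutation/cancellation that could occur when consecutive generators happen to land in the same factor — so that syllable length does \emph{not} simply add — is the delicate point, and I expect to resolve it by passing to a subsequence of generators on which the behavior is uniform (all images in one fixed factor, or all in pairwise distinct factors) before invoking the contradiction, together with the boundedness supplied by applying the one-factor retraction $G\to G_\lambda$ and the hypothesis on $G_\lambda$.
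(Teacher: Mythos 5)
Your subgroup and extension arguments follow the paper's route, modulo one imprecision in the extension step: from inverse limit slenderness of $Q$ you conclude only that $\varphi(x_n)\in K$ for $n>k$, and then assert that the restriction of $\varphi$ to $\hFk{k}$ lands in $K$. That does not follow, because $\hFk{k}$ is the inverse \emph{limit} of the free groups on $\{x_n\mid n>k\}$ and is far larger than the subgroup generated by those $x_n$ (it contains all the ``infinite words''). The fix is to use characterization (2) of Theorem~\ref{thm:lim-slender} rather than (3): the composite $\hF\to Q$ factors through some $p_k$, hence kills $\ker p_k\supseteq\hFk{k}$, and only then does the whole tail subgroup land in $K$. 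With that correction the extension case is exactly the paper's.

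The genuine gap is the free product case (and, as you have set things up, the direct sum case inherits it). Your plan is to derive a contradiction from a syllable-length/telescoping argument, but the obstacle you yourself flag --- cancellation --- is not a technicality to be handled by passing to a subsequence; it is the whole difficulty. Since $\varphi$ is an arbitrary, a priori discontinuous homomorphism, the element $\varphi(x_{n_1}x_{n_2}\cdots)$ is just some element of the free product, and all the telescoping identity gives you is $\varphi(w)=\varphi(x_{n_1}\cdots x_{n_j})\,\varphi(w_j)$ with $w_j$ the tail; the factors $\varphi(w_j)$ are uncontrolled elements that can cancel arbitrarily much of the initial segment, so no lower bound on the syllable length of $\varphi(w)$ results. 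For direct sums this can be overcome because every element has finite support and the retractions onto finite sub-sums are homomorphisms compatible with that support: this is the paper's argument, which builds nested finite subsets $\Lambda_1\subseteq\Lambda_2\subseteq\cdots$ and a subsequence of generators, and then compares $\varphi(w')$ and $\varphi(w'')$ for two consecutive tails to force $\varphi(x_{k_{n+1}})=1$. No analogue of finite support survives passage to a free product, and the paper does not attempt an elementary argument there: it invokes Slutsky's automatic continuity theorem for homomorphisms into free products (\cite[Theorem 1.5]{Slutsky2013}), using that $\hF$ with the product topology is a complete topological group. That theorem is the missing ingredient in your outline, and without it (or a genuinely new idea for controlling cancellation) the free product case does not close.
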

\begin{proof}
That a subgroup of a {\limslender} group is also {\limslender}  is obvious.

To see that an extension of {\limslender} groups is \limslender, let $A$ be a normal subgroup  of $G$
with $q\colon G\to G/A$ the projection to the quotient group,
and assume that both $A$ and $G/A$ are \limslender. Clearly, for every homomorphism
$\varphi\colon \hF\to G$
there exists $n$ such that $\hFk{n} \subseteq \ker (q\circ\varphi)$, therefore $\varphi(\hFk{n})\subseteq A$.
Since $A$ is also slender, there exists $m\ge n$, such that $\varphi(x_i)=1$ for $i\ge m$.

It is obvious that a finite direct sum of \limslender{} groups is trivially \limslender.
We will show that a direct sum of an arbitrary family $\{G_\lambda\mid \lambda\in\Lambda\}$
of \limslender{} groups is also \limslender.

If $\bigoplus_\Lambda G_\lambda$ is not \limslender, then there exists a homomorphism $\varphi\colon \hF\to
\bigoplus_\Lambda G_\lambda$ that does not factor through any projection. By applying
Lemma \ref{lem: not slender} if necessary, we may assume that $\varphi(x_i)\ne 1$ for all $i\in\NN$.
By definition of direct sum, there exists a finite subset $\Lambda_1\subset\Lambda$, such that
$\varphi(x_1)$ is contained in $\bigoplus_{\Lambda_1} G_\lambda$. Since
$\bigoplus_{\Lambda_1} G_\lambda$ is \limslender{}, there exists $k_1\in\NN$, such
that
$\varphi(x_i)\in\ker \big(p_{\Lambda_1}\colon \bigoplus_\Lambda G_\lambda\to \bigoplus_{\Lambda_1} G_\lambda\big)
$ for $i\ge k_1$.
Similarly, there exists a finite set $\Lambda_2$ ($\Lambda_1\subseteq \Lambda_2\subseteq \Lambda$),
such that $\varphi(x_{k_1})\in\bigoplus_{\Lambda_2} G_\lambda$, and an integer $k_2\ge k_1$, such that
$\varphi(x_i)\in\ker (p_{\Lambda_2})$ for $i\ge k_2$. Continuing this procedure, we obtain a sequence
of finite subsets $\Lambda_1\subseteq \Lambda_2\subseteq\cdots \Lambda$ and elements
$x_{k_0}=x_1,x_{k_1},x_{k_2},\ldots$ of $\hF$, such that $\varphi(x_{k_i})\in\bigoplus_{\Lambda_n} G_\lambda$
for $i\le n$, and $\varphi(x_{k_i})\in\ker (p_{\Lambda_n})$ for $i>n$.

Consider $\varphi(w)$ for $w:=x_{k_0} x_{k_1} x_{k_2}\cdots\in\hF$. There exists $n$, so that
$p_\lambda(\varphi(w))=1$ for $\lambda\in(\bigcup \Lambda_k)-\Lambda_n$. It is easy to check that for
$w':=(x_{k_0}\cdots x_{k_n})^{-1}w=x_{k_{n+1}} x_{k_{n+2}}\!\!\!\!\cdots $ and
$w'':=(x_{k_0}\cdots x_{k_{n+1}})^{-1}w=x_{k_{n+2}} x_{k_{n+3}}\!\!\!\!\cdots $ we have
$p_\lambda\varphi(w')=p_\lambda \varphi(w'')=1$ for all $\lambda\in\bigcup \Lambda_k$. But this implies
that $\varphi(x_{n+1})=\varphi(w'(w'')^{-1})=1$, which is a contradiction.

Finally, that free products of \limslender{} groups is \limslender{} follows from \cite[Theorem 1.5]{Slutsky2013}
because, when endowed with the product topology, $\hF$ becomes a complete topological group.
\end{proof}

We are now going to compare inverse limit slender groups with classical  slender group, n-slender groups,
lch-slender groups and cm-slender groups (see Introduction for the relevant definitions).

Since the group $\ZZ^\NN$ is the limit of an inverse sequence of groups $\ZZ^n$ with surjective bonding
maps, it immediately folows that every inverse limit slender group is slender. The inclusion is proper,
because the group $\hF$ is slender (by \cite[Theorem 4.3]{Gobel1975}, since $\hF$ is locally free),
but it is not inverse limit slender, because the identity homomorphism on $\hF$
does not factor through any of the projections).

To compare n-slender and inverse slender group we prove the following result.

\begin{prop}
\label{prop:nc implies inverse limit}
Every \nslender~ group is inverse limit slender.
\end{prop}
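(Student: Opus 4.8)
The plan is to reduce everything, via the generator-test built into Theorem~\ref{thm:lim-slender}, to the fact that the Hawaiian earring group $\HEG$ already maps canonically and projection-compatibly into $\hF$. Recall from the Introduction that $\HEG$ carries projections $q_n\colon\HEG\to F_n$ (induced by collapsing all circles of the Hawaiian earring past the $n$-th), that these are compatible with the bonding maps in the sense that $p_i^j\circ q_j=q_i$ for $i<j$, and that $\HEG$ contains the free group $F_\infty$ on $\{x_n\mid n\in\NN\}$ as the subgroup of loops travelling around only finitely many circles, with $q_n$ restricting on $F_\infty$ to the retraction that kills $x_j$ for $j>n$.

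First I would produce, by the universal property of $\hF=\varprojlim(F_n,p_i^j)$ applied to the compatible family $(q_n)$, a homomorphism $\iota\colon\HEG\to\hF$ with $p_n\circ\iota=q_n$ for every $n$; comparing components in the standard model of the inverse limit then shows that $\iota$ restricts to the inclusion $F_\infty\hookrightarrow\hF$. (Eda proved that $\iota$ is injective, but injectivity is not needed here.) Now let $\varphi\colon\hF\to G$ be an arbitrary homomorphism and consider $\varphi\circ\iota\colon\HEG\to G$. Since $G$ is $n$-slender, $\varphi\circ\iota$ factors as $\varphi'\circ q_n$ for some $n$ and some homomorphism $\varphi'\colon F_n\to G$. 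Evaluating on $x_j\in F_\infty$ with $j>n$ gives $\varphi(x_j)=(\varphi\circ\iota)(x_j)=\varphi'(q_n(x_j))=\varphi'(1)=1$. Hence $\varphi(x_j)=1$ for all $j>n$, so $\varphi$ satisfies condition (3) of Theorem~\ref{thm:lim-slender}, and therefore $G$ is inverse limit slender.

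The argument is essentially formal once the comparison map $\iota$ is available, so the ``main obstacle'' is mild. The two points that require care are checking that the projections $\HEG\to F_n$ really are compatible with the bonding maps $p_i^j$ (so that the universal property of $\hF$ applies and $\iota|_{F_\infty}$ is honestly the inclusion), and using the precise form of Eda's definition of $n$-slenderness, namely that $\varphi\circ\iota$ factors through an actual homomorphism $F_n\to G$, so that evaluation at the later generators $x_j$ is forced to be trivial. All of the genuinely infinitary content has already been absorbed into Theorem~\ref{thm:lim-slender} (and Lemma~\ref{lem: not slender}), so no further word-combinatorial analysis of $\hF$ is needed.
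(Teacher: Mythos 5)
Your proof is correct and is essentially the paper's argument run in the direct rather than contrapositive direction: both reduce to the generator criterion (3) of Theorem~\ref{thm:lim-slender} by pulling a homomorphism $\hF\to G$ back along the canonical projection-compatible map $\HEG\to\hF$ and invoking $n$-slenderness to kill all but finitely many of the $x_j$. The extra care you take in constructing $\iota$ via the universal property is fine but not a genuinely different route; the paper simply regards $\HEG$ as a subgroup of $\hF$ and restricts.
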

\begin{proof}
Assume $G$ is not inverse limit slender. By theorem \ref{thm:lim-slender} there exists a homomorphism
$\varphi\colon\hF \to G$, such that $\varphi(x_i)\ne 1$ for infinitely many indices $i$. But then
the restriction $\varphi|_\HEG\colon\HEG\to G$ of $\varphi$ to the free complete product of cyclic groups $\HEG$
clearly cannot factor through any projection to a free group of finite rank, so $G$ is not n-slender.
\end{proof}

On the other hand Eda \cite[Theorem 1.2]{Eda1998} proved that the group $\HEG$ is slender but it is obviously not
n-slender (because the identity homomorphism from $\HEG$ to itself does not factor through any of the
projections). Note that $\HEG$ can be viewed as a dense subgroup of $\hF$ if we equip the latter
with the inverse limit topology (as a subspace of the product of discrete groups $F_n$). Thus every
continuous homomorphism from $\hF$ to a topological group $G$ is uniquely determined by its restriction
to $\HEG$. This fact can be extended to all subgroups of $\hF$ that contain the infinitely generated
free group $F_\infty$.

\begin{thm}
\label{thm:res is injective}
Let $G$ be an inverse limit slender group and let $H$ be any subgroup of $\hF$ containing $F_\infty$.
Then the restriction function
$$\mathrm{res}\colon\Hom(\hF,G)\to\Hom(H,G)$$
is injective.
\end{thm}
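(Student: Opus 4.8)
The plan is to show that two homomorphisms $\varphi,\psi\colon\hF\to G$ that agree on $H$ must agree on all of $\hF$, and the key leverage is that inverse limit slenderness forces both of them to factor through a \emph{common} finite-rank free quotient of $\hF$.

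First I would invoke Theorem \ref{thm:lim-slender}: since $G$ is inverse limit slender, condition (3) gives indices $n,\psi$ such that $\varphi(x_j)=1$ for $j>n$ and $\psi(x_j)=1$ for $j>m$; equivalently $\hFk{n}\subseteq\ker\varphi$ and $\hFk{m}\subseteq\ker\psi$. Setting $N=\max\{n,m\}$, both $\varphi$ and $\psi$ kill $\hFk{N}=\ker p_N$, so each factors through the projection $p_N\colon\hF\to F_N$: write $\varphi=\varphi'\,p_N$ and $\psi=\psi'\,p_N$ with $\varphi',\psi'\colon F_N\to G$.

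Next I would restrict attention to the countable dense free subgroup $F_\infty\subseteq H$. The projection $p_N$, restricted to $F_\infty$, sends $x_1,\dots,x_N$ to themselves and $x_j\mapsto 1$ for $j>N$, hence $p_N(F_\infty)=F_N$; that is, $p_N|_{F_\infty}$ is surjective onto $F_N$. Since $\varphi|_H=\psi|_H$ by hypothesis, in particular $\varphi|_{F_\infty}=\psi|_{F_\infty}$, i.e. $\varphi'\circ(p_N|_{F_\infty})=\psi'\circ(p_N|_{F_\infty})$. Surjectivity of $p_N|_{F_\infty}$ then forces $\varphi'=\psi'$ as homomorphisms $F_N\to G$, and therefore $\varphi=\varphi'\,p_N=\psi'\,p_N=\psi$. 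This shows $\mathrm{res}$ is injective.

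There is no serious obstacle here; the only point requiring a moment's care is that a priori $\varphi$ and $\psi$ might factor through projections to free groups of different ranks, which is handled simply by passing to the maximum of the two indices. The substantive input is entirely Theorem \ref{thm:lim-slender}, which converts ``inverse limit slender'' into the statement that every homomorphism out of $\hF$ is trivial on all but finitely many of the generators $x_j$; everything else is the elementary observation that $F_\infty$ already surjects onto each $F_N$.
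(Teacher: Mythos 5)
Your argument is correct in substance but takes a genuinely different route from the paper's. You argue algebraically: by Theorem \ref{thm:lim-slender} both $\varphi$ and $\psi$ factor through a common projection $p_N$ (pass to the maximum of the two indices, using $\ker p_N\subseteq\ker p_n$ for $N\ge n$), and since $p_N$ maps $F_\infty$ onto $F_N$, agreement on $F_\infty\subseteq H$ forces the induced homomorphisms $F_N\to G$ to coincide, hence $\varphi=\psi$. The paper instead argues topologically: the factorization through some $p_i$ makes every homomorphism $\hF\to G$ continuous when $\hF$ carries the product topology and $G$ is discrete, and $F_\infty$ (hence any $H$ containing it) is dense in $\hF$, so two such homomorphisms agreeing on $H$ agree everywhere. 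Both proofs rest on the same substantive input, Theorem \ref{thm:lim-slender}(2); yours is more elementary and self-contained, while the paper's emphasizes the automatic-continuity viewpoint that recurs throughout.

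One step needs repair. You assert $\hFk{N}=\ker p_N$, and earlier that $\varphi(x_j)=1$ for $j>n$ is ``equivalent'' to $\hFk{n}\subseteq\ker\varphi$. In fact only the inclusion $\hFk{N}\subseteq\ker p_N$ holds (as in Definition \ref{def:hF}): $\ker p_N$ also contains threads whose coordinates involve $x_1,\dots,x_N$ but die under projection to $F_N$, for instance the thread whose $i$-th coordinate is $[x_1,x_{N+1}][x_1,x_{N+2}]\cdots[x_1,x_i]$. So knowing that $\varphi$ kills $\hFk{N}$ does not by itself give $\ker p_N\subseteq\ker\varphi$, which is what the factorization through $p_N$ requires; likewise, vanishing on the generators $x_j$, $j>n$, does not formally imply vanishing on all of $\hFk{n}$ for an arbitrary homomorphism. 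The fix is immediate: invoke condition (2) of Theorem \ref{thm:lim-slender} instead of (3), which directly yields $\ker p_n\subseteq\ker\varphi$ and $\ker p_m\subseteq\ker\psi$ for some $n,m$; then your maximum-index observation and the surjectivity of $p_N|_{F_\infty}$ carry the rest of the argument verbatim.
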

\begin{proof}
Note that we can turn every group into a topological group by endowing it with the discrete
topology. If $G$ is inverse limit slender, then by \ref{thm:lim-slender}(2)
every homomorphism $\varphi\colon \hF \to G$ factors as $\varphi=\overline\varphi\circ p_i$
for some homomorphism $\overline\varphi\colon F_i\to G$. If we endow $F_i$ and $G$ with the discrete
topology and $\hF$ with the product topology, then $\overline\varphi$ is continuous because
$F_i$ is discrete and $p_i$ is continuous by definition of product topology. We conclude that $\varphi$
is continuous, being a composition of continuous functions. Since $p_i(F_\infty)=F_i$ it follows that
$F_\infty$ is dense in $\hF$, and the same holds for every subgroup $H\le \hF$ containing $F_\infty$.
Therefore, if $\varphi,\psi\colon\hF\to G$ are homomorphims satisfying $\varphi|_H=\psi_H$,  then
$\varphi=\psi$, so the restriction homomorphism is injective.
\end{proof}

Note, that $\textrm{res}\colon\Hom(\hF,G)\to\Hom(\HEG,G)$ need not be injective for an arbitrary group $G$.
In fact, the normal closure $N$ of $\HEG$ in $\hF$ is a proper subgroup of $\hF$, so the quotient
homomorphism $q\colon \hF\to\hF/N$ is a non-trivial element of  $\Hom(\hF,\hF/N)$ that restricts to
a trivial element of $\Hom(\HEG,\hF/N)$.

In a recent paper Corson and Varghese \cite{CorsonVarghese2020} proved that a group $G$ is lch-slender
if, and only if, $G$ is torsion-free and it does not contain subgroups isomorphic to $\QQ$ or to $p$-adic
integers for any prime $p$. This coincides with the characterization of \emph{stout} groups
in G\"obel \cite[Theorem 4.1]{Gobel1975}. Following G\"obel \cite{Gobel1975} a group $G$ is \emph{stout}
if $\Hom(\prod_iH_i, G)$ is trivial whenever $\{H_i\mid i\in\NN\}$ is a set of groups with trivial $\Hom(H_i,G)$.
He proved that every slender group is stout, and that $\ZZ^\NN$ is stout but not slender.

We may summarize the above discussion in the following theorem.

\begin{thm}
\label{thm:relations between slender}
The class of n-slender groups is contained in the class of inverse limit slender groups, which is contained
in the class of slender groups, which is in turn contained in the class of lch-slender groups. All
containments are strict.

Furthermore, the class of cm-slender groups is contained in the class of inverse limit slender groups.
\end{thm}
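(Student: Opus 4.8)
The statement is a summary, so the plan is to assemble the three inclusions and their strictness from what has already been established in this section, and to supply the two points still outstanding: the strictness of the inclusion of $n$-slender groups in inverse limit slender groups, and the inclusion of cm-slender groups in inverse limit slender groups.

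For the chain of inclusions I would argue as follows. That every $n$-slender group is inverse limit slender is Proposition~\ref{prop:nc implies inverse limit}. That every inverse limit slender group is slender was observed above, since $\ZZ^\NN=\varprojlim(\ZZ^n)$ with surjective bonding maps. For the inclusion of slender groups in lch-slender groups I would invoke G\"obel's theorem that every slender group is stout, together with the coincidence of the class of stout groups with the class of lch-slender groups --- which follows by comparing G\"obel's \cite[Theorem 4.1]{Gobel1975} with the characterization of Corson--Varghese \cite{CorsonVarghese2020}, both recorded above.

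For the strictness of the first two inclusions I would use the examples already in play: $\hF$ is locally free, hence slender by \cite[Theorem 4.3]{Gobel1975}, but is not inverse limit slender since $\mathrm{id}_{\hF}$ factors through no projection $\hF\to F_i$; and $\ZZ^\NN$ is stout, hence lch-slender, but is not slender, both by G\"obel. For the strictness of the inclusion of $n$-slender in inverse limit slender groups one needs an inverse limit slender group that is not $n$-slender; the natural candidate is $\HEG$ itself, which is certainly not $n$-slender (as $\mathrm{id}_\HEG$ factors through no $F_n$) and which should be inverse limit slender: by Theorem~\ref{thm:lim-slender} this amounts to showing that every homomorphism $\hF\to\HEG$ sends all but finitely many of the generators $x_i$ to $1$, a statement of the kind provided by Eda's analysis of homomorphisms into the Hawaiian earring group. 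I expect this last verification to be the main obstacle; should it not be directly citable, the fallback is to pass instead to a suitable quotient of $\HEG$.

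Finally, for the inclusion of cm-slender groups in inverse limit slender groups, the key observation is that $\hF$, with its inverse limit topology --- the subspace topology inherited from the product $\prod_{n}F_n$ of the discrete groups $F_n$ --- is a completely metrizable topological group: $\prod_n F_n$ is completely metrizable, being a countable product of discrete (hence completely metrizable) groups, and $\hF$ is a closed subgroup of it, being cut out by the equations $w_i=p_i^j(w_j)$. Hence, if $G$ is cm-slender, every homomorphism $\varphi\colon\hF\to G$ has open kernel. Now the subgroups $\ker p_n$ form a neighborhood basis at the identity of $\hF$: any basic open neighborhood of $1$ contains one obtained by prescribing finitely many coordinates to be trivial, and if $n$ is the largest index occurring then $\ker p_n$ lies inside that neighborhood, since $p_i=p_i^n\,p_n$ for $i<n$. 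So $\ker\varphi\supseteq\ker p_n$ for some $n$, whence $\varphi$ factors through $\hF/\ker p_n$; and since the bonding maps are surjective, $p_n\colon\hF\to F_n$ is surjective with kernel $\ker p_n$, so $\hF/\ker p_n\cong F_n$ and $\varphi$ factors through the projection $p_n$. By Theorem~\ref{thm:lim-slender}(\ref{inv2}), $G$ is inverse limit slender. The only routine verifications here are the complete metrizability of $\hF$ and the fact that the $\ker p_n$ form a neighborhood basis at the identity.
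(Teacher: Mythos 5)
Your proposal follows essentially the same route as the paper: the theorem is proved there by the single sentence ``We may summarize the above discussion,'' and that discussion consists of exactly the ingredients you assemble --- Proposition \ref{prop:nc implies inverse limit} for the first inclusion, the observation that $\ZZ^\NN=\varprojlim\ZZ^n$ with surjective bonding maps for the second, G\"obel's theorem that slender implies stout together with the Corson--Varghese characterization for the third, and the examples $\hF$ (slender but not inverse limit slender) and $\ZZ^\NN$ (lch-slender but not slender) for strictness. Two remarks are worth making. First, your argument that cm-slender implies inverse limit slender --- complete metrizability of $\hF$ as a closed subgroup of $\prod_n F_n$, plus the fact that the subgroups $\ker p_n$ form a neighbourhood basis at the identity --- is nowhere written out in the paper, which only mentions in passing (in the proof of Theorem \ref{thm:closed under}) that $\hF$ is a complete topological group; your version is the intended argument and is correct. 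Second, your worry about the strictness of the first inclusion is well placed: the paper records only that $\HEG$ is slender but not n-slender, which separates n-slender from slender but does not by itself decide which of the two intermediate inclusions is strict; one still needs a non-n-slender group that is inverse limit slender, i.e.\ the verification via Eda's non-commutative Specker phenomenon that you describe, and neither your proposal nor the paper actually carries this out.
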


The world of slender groups is quite rich and varied, see Remark \ref{rmk:slender}.
Eda \cite[Theorem 3.3]{Eda1992} proved that an abelian group is n-slender if, and only if, it is slender.
Since the class of inverse limit slender groups is intermediate, they must all coincide.
On the other hand, the group $\ZZ^\NN$ is lch-slender by \cite{CorsonVarghese2020} but is not slender.

\begin{cor}
\label{cor:equivalent for abelian}
For abelian groups the class of slender, inverse limit slender and n-slender groups coincide, and are
strictly contained in the class of lch-slender abelian groups.
\end{cor}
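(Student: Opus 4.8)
The plan is to obtain the coincidence by a squeeze argument and the strictness from a single example. First I would invoke Theorem \ref{thm:relations between slender}, which already records that, for arbitrary groups, every \nslender{} group is \limslender{}, every \limslender{} group is \slender{}, and every \slender{} group is lch-slender. Restricting attention to abelian groups, Eda's equivalence \cite[Theorem 3.3]{Eda1992}, recalled just above, tells us that an abelian group is \nslender{} precisely when it is \slender{}. Hence the outermost two of the three classes agree on abelian groups, and since the class of \limslender{} abelian groups sits between them, all three coincide. No extra work is needed beyond quoting these two inputs.

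For strictness of the inclusion into lch-slender abelian groups, I would point to $\ZZ^\NN$. By the Corson--Varghese characterization \cite{CorsonVarghese2020} it is lch-slender, being torsion-free and containing neither $\QQ$ nor the $p$-adic integers for any prime $p$; on the other hand it is not \slender{}, since the identity map $\ZZ^\NN\to\ZZ^\NN$ visibly does not factor through any projection $\pr_n\colon\ZZ^\NN\to\ZZ^n$. Thus $\ZZ^\NN$ is an abelian group lying in the lch-slender class but outside the (common) class of \slender{}, \limslender{} and \nslender{} abelian groups, so the containment is proper.

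I do not anticipate any genuine obstacle: the corollary is a formal consequence of Theorem \ref{thm:relations between slender} together with Eda's theorem, and the separating example $\ZZ^\NN$ is manifestly abelian, so the only point requiring attention is assembling these facts in the right order.
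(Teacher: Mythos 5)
Your proof is correct and follows essentially the same route as the paper: both squeeze the inverse limit slender class between the n-slender and slender classes using Theorem \ref{thm:relations between slender} together with Eda's equivalence of n-slender and slender for abelian groups, and both use $\ZZ^\NN$ (lch-slender by Corson--Varghese, but not slender) to establish strictness. No issues.
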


Incidentally, the class of abelian lch-slender groups coincides with the class of \emph{cotorsion-free}
groups (i.e., groups that do not contain any cotorsion group in the sense of Fuchs \cite[Ch.9]{Fuchs2015}

All examples of groups that distinguish the above classes are uncountable, and we have not been able to
find analogous examples of countable groups. Thus we state the following conjecture.

\begin{conj}
The classes of  slender, inverse limit slender, n-slender, lch-slender and cm-slender groups coincide
for all countable groups.
\end{conj}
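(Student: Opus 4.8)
A natural line of attack on this conjecture runs as follows. The plan is to reduce it to a single, concrete implication and then attack that implication by ``localizing'' $n$-slenderness. By Theorem~\ref{thm:relations between slender}, every $n$-slender group is \limslender, every \limslender{} group is slender, every slender group is lch-slender, and every cm-slender group is \limslender. For countable groups, slenderness and lch-slenderness are already completely described: by G\"obel \cite{Gobel1975} a group is slender iff all of its abelian subgroups are slender, and by S\k{a}siada \cite{Sasiada1959} a countable abelian group is slender iff it is torsion-free and contains no copy of $\QQ$, so a countable group is slender exactly when it is torsion-free and contains no copy of $\QQ$; and by Corson--Varghese \cite{CorsonVarghese2020}, whose $p$-adic obstruction is vacuous in the countable case, the same condition characterizes the countable lch-slender groups. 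Granting the expected Corson--Varghese-type description of cm-slenderness as well, the conjecture reduces to the single implication

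\textbf{(C)}\quad\emph{every countable torsion-free group with no subgroup isomorphic to $\QQ$ is $n$-slender},

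which is the countable non-commutative analogue of S\k{a}siada's theorem; its abelian case is known, being S\k{a}siada's theorem together with Eda's equivalence of abelian slenderness and abelian $n$-slenderness \cite{Eda1992} (see also Corollary~\ref{cor:equivalent for abelian}).

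To prove (C), observe first that, by the reduction above, any such group $G$ is automatically slender, so the only task is to upgrade ``slender'' to ``$n$-slender''. This upgrade cannot be read off the abelian subgroups of $G$: the group $\HEG$ is slender \cite{Eda1998}, hence all of its abelian subgroups are $n$-slender, yet $\HEG$ itself is not $n$-slender --- so countability must be used essentially. The plan is to suppose $G$ is a counterexample and unpack the definition: there is a homomorphism $\varphi\colon\HEG\to G$ that does not factor through any projection $\HEG\to F_i$, equivalently a normal subgroup $N=\ker\varphi$ of $\HEG$ containing no kernel $\ker(\HEG\to F_i)$. Replacing $G$ by $\varphi(\HEG)$ --- still countable, still torsion-free, still without a copy of $\QQ$, and still witnessing non-$n$-slenderness --- one may assume $G\cong\HEG/N$. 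One would then try to manufacture, out of the ``smallness'' of $N$ (the fact that it omits every $\ker(\HEG\to F_i)$), a non-trivial torsion element or a copy of $\QQ$ inside $\HEG/N$, contradicting the hypotheses.

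The hard part will be exactly this extraction step --- in effect, a structural understanding of the normal subgroups $N\le\HEG$ with countable quotient, and a proof that any such $N$ not containing some $\ker(\HEG\to F_i)$ must create torsion or infinite divisibility in $\HEG/N$. Every device presently available for producing such obstructions --- the diagonal ``independence'' argument that proves $\ZZ$ slender, Eda's calculus of infinite words in $\HEG$, and Shelah's \cite{Shelah1988} dichotomy for fundamental groups of Peano continua --- feeds on an \emph{uncountable} reservoir of mutually independent inputs, which is precisely what the countability of $\HEG/N$ removes; I therefore expect a genuinely new ingredient, such as a structure theorem for ``small-quotient'' normal subgroups of $\HEG$ or a countable surrogate for these independence arguments, to be needed. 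Once (C) is in place the rest follows: for countable groups $n$-slender $=$ slender $=$ lch-slender, the class \limslender{} is squeezed between $n$-slender and slender, and cm-slender, already contained in \limslender, is pinned down by its anticipated characterization.
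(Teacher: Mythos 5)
This statement is an open conjecture in the paper; the authors offer no proof, and neither do you. Your reduction of the conjecture to the single implication \textbf{(C)} is sound and consistent with the paper's own framework: Theorem~\ref{thm:relations between slender} gives the chain of containments, G\"obel's abelian-subgroup criterion together with S\k{a}siada's theorem identifies the countable slender groups as the countable torsion-free groups without $\QQ$, and the Corson--Varghese characterization shows the $p$-adic obstruction is vacuous for countable groups, so countable slender $=$ countable lch-slender. Squeezing \limslender{} between n-slender and slender is then correct. Your diagnosis of why the abelian-subgroup route cannot upgrade slender to n-slender (the example of $\HEG$) is also a genuinely useful observation.

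However, there are two real gaps. First, and fatally, the implication \textbf{(C)} --- that every countable torsion-free group without $\QQ$ is n-slender --- is exactly the content of the conjecture, and you do not prove it; you explicitly defer to ``a genuinely new ingredient'' such as a structure theorem for normal subgroups $N\le\HEG$ with countable quotient omitting every $\ker(\HEG\to F_i)$. No such theorem is known, and producing torsion or a copy of $\QQ$ in $\HEG/N$ from the mere failure of $N$ to contain a kernel of a projection is the entire difficulty. Second, your treatment of cm-slenderness rests on ``granting the expected Corson--Varghese-type description,'' which is an unproved assumption: the paper only records cm-slender $\subseteq$ \limslender, and the reverse containment for countable groups is part of what the conjecture asserts. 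So what you have is a plausible reduction and research programme, not a proof; the statement remains open.
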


\section{Slender as a co-small object}
\label{sec:Slender as a co-small object}

The concept of inverse limit slender groups leads to a connection between slender groups and
the categorical notion of small and co-small objects that we examine in this section.

As explained in Section \ref{sec:Preliminaries on inverse and direct limits}, in any category
there are natural bijections
$$\Hom(A,\varprojlim X_i)\cong\varprojlim \Hom(A,X_i)\ \ \text{and} \ \
\Hom(\varinjlim X^i, A)=\varprojlim \Hom(X^i,A)$$
whenever suitable direct and inverse limit exist. In addition, there are natural functions
$$\Delta_A\colon \varinjlim\Hom(A,X^i)\longrightarrow \Hom(A,\varinjlim X^i)$$
and
$$\nabla_A\colon \varinjlim\Hom(X_i,A)\longrightarrow \Hom(\varprojlim X_i,A),$$
again provided the existence of suitable limits. In addition,  if the bonding maps
in the direct sequence are monomorphisms then $\Delta$ is injective, and if the bonding maps in the inverse
sequence are epimorphisms then $\nabla$ is injective.

\begin{defn}
An object $A$ of the category $\Cc$ is \emph{small} if $\Delta_A$ is a bijection for every convergent direct
sequence in $\Cc$ with monomorphic bonding maps.
Dually, an object $A$ of the category $\Cc$ is \emph{co-small} if $\nabla_A$ is a bijection for every
convergent inverse sequence in $\Cc$ with epimorphic bonding maps.
\end{defn}

\begin{rmk}
Although small objects and related concepts are extensively studied in category theory, the terminology and
precise definitions are far from being standardized. They are known variously as finitely generated objects,
finitely presentable objects,
compact objects, small-projective objects, tiny objects etc., and the definitions may require preservation of
arbitrary co-limits, filtered co-limits, co-limits with monomorphic bonding maps, co-products and several
other variants. The dual concept is usually known as co-small or co-compact object, and there is also
a variety of definitions.
\end{rmk}

It is not difficult to see that small objects in the category of groups are precisely the finitely
generated groups. To understand which objects are co-small in the category of groups observe that
for a given inverse sequence $(H_i,p_i^j)$ the image of $\nabla_A$ consists precisely of homomorphisms
from $\varprojlim H_i$ to $A$ that can be factored through some projection $p_i\colon \varprojlim H_i\to H_i$.
Thus we obtain the following restatement of Theorem \ref{thm:lim-slender}

\begin{thm}
\label{thm:lim-slender as co-small}
The following statements are equivalent:
\begin{enumerate}
\item The group $G$ is a co-small object in the category of groups.
\item $G$ is an inverse slender group.
\item $\nabla_G\colon\varinjlim\Hom(F_n,G)\to \Hom(\hF,G)$ is a bijection.
\item For every inverse sequence of groups $(H_i,p_i^j)$ (non necessarily with surjective bonding maps)
$\nabla_G$ induces a bijection between $\Hom(\varprojlim H_i,G)$ and $\varinjlim\Hom(\im p_i,G)$.
\end{enumerate}
\end{thm}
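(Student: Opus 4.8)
The plan is to establish the cycle of implications $(1)\Rightarrow(3)\Rightarrow(2)\Rightarrow(4)\Rightarrow(1)$, using Theorem \ref{thm:lim-slender} (hence ultimately Lemma \ref{lem: not slender}) as a black box and otherwise just unwinding the definition of $\nabla_G$. Two elementary facts will be used throughout. First, in the category of groups epimorphisms coincide with surjections, and a countable inverse sequence $(H_i,p_i^j)$ of groups with surjective bonding maps has surjective projections $p_i\colon\varprojlim H_i\to H_i$: any $h_i\in H_i$ can be lifted step by step to a compatible thread using surjectivity of the maps $p_j^{j+1}$, $j\geq i$, and pushed down by $p_j^i$ for $j<i$. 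Second, as recorded in Section \ref{sec:Preliminaries on inverse and direct limits}, for an inverse sequence with epimorphic bonding maps $\nabla_G$ is injective, and its image is exactly the set of homomorphisms $\varprojlim H_i\to G$ that factor through some projection $p_i$; so for such a sequence $\nabla_G$ is bijective if and only if every homomorphism $\varprojlim H_i\to G$ factors through some $p_i$.

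Then $(1)\Rightarrow(3)$ is immediate: the inverse sequence $(F_i,p_i^j)$ defining $\hF$ has surjective bonding maps, so co-smallness of $G$ forces $\nabla_G\colon\varinjlim\Hom(F_i,G)\to\Hom(\hF,G)$ to be a bijection. For $(3)\Rightarrow(2)$, surjectivity of $\nabla_G$ says every homomorphism $\hF\to G$ factors through some projection $\hF\to F_i$, which by Theorem \ref{thm:lim-slender} is equivalent to $G$ being inverse limit slender. For $(2)\Rightarrow(4)$, given an arbitrary inverse sequence $(H_i,p_i^j)$ I would pass to the sequence $(\im p_i,p_i^j)$: its bonding maps are surjective, since $p_i^j(\im p_j)=p_i^j p_j(\varprojlim H_i)=p_i(\varprojlim H_i)=\im p_i$, its inverse limit is canonically $\varprojlim H_i$, and its projections are the corestrictions of the $p_i$. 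By the second fact, $\nabla_G$ is injective here with image the homomorphisms factoring through some projection onto $\im p_i$; and Theorem \ref{thm:lim-slender} guarantees, for inverse limit slender $G$, that every homomorphism $\varprojlim H_i\to G$ does so factor. Hence $\nabla_G$ is surjective, so bijective, which is $(4)$. Finally $(4)\Rightarrow(1)$: since every inverse sequence of groups converges, a convergent inverse sequence with epimorphic bonding maps is just one with surjective bonding maps, where $\im p_i=H_i$ for all $i$ by the first fact; then $(4)$ literally states that $\nabla_G\colon\varinjlim\Hom(H_i,G)\to\Hom(\varprojlim H_i,G)$ is a bijection, i.e. $G$ is co-small.

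Thus the statement is essentially a repackaging of Theorem \ref{thm:lim-slender}; the only genuinely new points are routine: identifying $\im\nabla_G$ with the ``factors through a projection'' homomorphisms, surjectivity of the projections of a surjective inverse sequence, and the reduction of an arbitrary inverse sequence to its sequence of images. I do not anticipate a serious obstacle; the only place calling for mild care is $(2)\Rightarrow(4)$, where one must match the clause of Theorem \ref{thm:lim-slender} about factoring through the projection onto $p_i(\varprojlim H_i)$ with the image of $\nabla_G$ for the sequence $(\im p_i,p_i^j)$.
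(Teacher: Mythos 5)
Your proposal is correct and follows essentially the same route as the paper: the paper's proof consists of observing that the image of $\nabla_G$ is exactly the set of homomorphisms factoring through some projection and then declaring the theorem a restatement of Theorem \ref{thm:lim-slender}, which is precisely what your cycle of implications spells out (including the two routine supporting facts about surjectivity of the projections and the reduction to the sequence of images $(\im p_i)$).
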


\begin{rmk}\label{rmk:slender}
The class of groups satisfying the statements of the above theorem is by no means small. In fact, by
combining Proposition \ref{prop:nc implies inverse limit} with results of \cite{ConnerCorson2019} and
\cite{Corson2018} one can see that the above hold for any group in the following list:
\begin{enumerate}
\item Free groups,
\item Free abelian groups,
\item Torsion-free word hyperbolic groups,
\item Torsion-free one-relator groups,
\item Baumslag-Solitar groups,
\item Countable diagram groups over some semigroup presentation,
\item Thompson's group $F$.
\end{enumerate}

\end{rmk}

Other types of slender groups can be characterized in a similar way, which allows to compare them
in a unified way. So, for slender groups we have the following result, where the first statement is
essentially the definition and the second statement follows from \cite[Theorem 4.3]{Gobel1975}.

\begin{prop}
\label{prop:slender as co-small}
A group $G$ is slender, if, and only if
$$\nabla_G\colon\varinjlim\Hom(\ZZ^n,G)\to \Hom(\ZZ^\NN,G)$$
is a bijection, or equivalently,
if the natural inclusion
$$\bigcup_{i\in\NN} \Hom(\prod_{i=1}^n H_i,G)\to \Hom(\prod_{i\in\NN} H_i,G)$$
is  a bijection  for every sequence of groups $\{H_i\mid i\in\NN\}$.
\end{prop}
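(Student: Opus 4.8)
The plan is to treat the two asserted equivalences separately. The first one --- that $G$ is slender if and only if $\nabla_G\colon\varinjlim\Hom(\ZZ^n,G)\to\Hom(\ZZ^\NN,G)$ is a bijection --- is essentially a restatement of the definition, and I would prove it by the same argument that gives Theorem~\ref{thm:lim-slender as co-small}: the bonding maps $\ZZ^{n+1}\to\ZZ^n$ of the inverse sequence whose limit is $\ZZ^\NN$ are surjective, hence epimorphisms in the category of groups, so $\nabla_G$ is injective by the discussion in Section~\ref{sec:Preliminaries on inverse and direct limits}; and the image of $\nabla_G$ is exactly the set of homomorphisms $\ZZ^\NN\to G$ that factor through some projection $\pr_n$, so $\nabla_G$ is onto precisely when $G$ is slender.

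For the second equivalence, one direction is free: taking $H_i=\ZZ$ for every $i$ specializes the inclusion $\bigcup_n\Hom(\prod_{i=1}^n H_i,G)\to\Hom(\prod_{i\in\NN}H_i,G)$ to $\bigcup_n\Hom(\ZZ^n,G)\to\Hom(\ZZ^\NN,G)$, which is a bijection precisely when $G$ is slender. The content is the converse. So assume $G$ is slender, fix a sequence $\{H_i\mid i\in\NN\}$ and a homomorphism $\varphi\colon\prod_{i\in\NN}H_i\to G$, and set $P_n:=\prod_{i>n}H_i$, a normal subgroup of $\prod_{i\in\NN}H_i$ with quotient $\prod_{i=1}^n H_i$. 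Bijectivity of the inclusion amounts (its injectivity being clear, since the projections $\prod_{i=1}^{n+1}H_i\to\prod_{i=1}^{n}H_i$ are onto) to finding an $n$ with $P_n\subseteq\ker\varphi$.

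I would do this in two steps, each using slenderness of $G$. Let $\iota_i\colon H_i\hookrightarrow\prod_j H_j$ be the $i$-th factor inclusion and $\varphi_i:=\varphi\circ\iota_i$. \emph{Step 1: $\varphi_i$ is trivial for all but finitely many $i$.} If not, pick $i_1<i_2<\cdots$ and $a_k\in H_{i_k}$ with $\varphi_{i_k}(a_k)\ne1$, and let $\theta\colon\ZZ^\NN\to\prod_j H_j$ be the coordinatewise homomorphism carrying the standard generator $\mathbf e_k$ to $\iota_{i_k}(a_k)$; then $\varphi\circ\theta$ sends every $\mathbf e_k$ to a nontrivial element of $G$, which no homomorphism $\ZZ^\NN\to G$ factoring through a finite projection $\pr_n$ can do (it would have to vanish on $\mathbf e_k$ for $k>n$). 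Fix $N$ with $\varphi_i=1$ for all $i>N$. \emph{Step 2: $\varphi(P_N)=1$.} For $g=(g_i)\in P_N$, let $\alpha_g\colon\ZZ^\NN\to\prod_j H_j$ be the coordinatewise ``power'' homomorphism (reindexing $\ZZ^\NN$ by $\{N+1,N+2,\dots\}$) with $i$-th component $m\mapsto g_i^{\,m}$; by slenderness $\varphi\circ\alpha_g$ factors through some $\pr_n$. Since the all-ones sequence $\mathbf 1$ and its truncation $\mathbf 1^{(n)}$ after $n$ entries have equal image under $\pr_n$, we get $\varphi(g)=\varphi(\alpha_g(\mathbf 1))=\varphi(\alpha_g(\mathbf 1^{(n)}))$; but $\alpha_g(\mathbf 1^{(n)})=\iota_{N+1}(g_{N+1})\cdots\iota_{N+n}(g_{N+n})$ is a finite product of one-coordinate elements, so $\varphi$ sends it to $\prod_{i=N+1}^{N+n}\varphi_i(g_i)=1$. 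As $g$ was arbitrary, $P_N\subseteq\ker\varphi$. (This gives a direct proof of the pertinent half of \cite[Theorem~4.3]{Gobel1975}.)

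I expect Step 2 to be the main obstacle: it is \emph{not} a formal consequence of Step 1 that $\varphi$ kills the infinite product $P_N$ just because it kills each factor $H_i$ with $i>N$, and the whole point is to extract a second use of slenderness out of the auxiliary map $\alpha_g$. Step 1 is the familiar argument showing $\ZZ$ is slender, transplanted to this setting, and the remaining verifications (that $\theta$ and $\alpha_g$ are genuine homomorphisms, that reindexing $\ZZ^\NN$ changes nothing, that one-coordinate elements multiply coordinatewise) are routine.
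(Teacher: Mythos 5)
Your proof is correct, but it takes a different route from the paper: the paper offers no argument at all for this proposition, stating only that the first equivalence is ``essentially the definition'' and that the second ``follows from \cite[Theorem 4.3]{Gobel1975}.'' Your treatment of the first equivalence matches what the paper leaves implicit (injectivity of $\nabla_G$ from the surjectivity of the bonding maps $\ZZ^{n+1}\to\ZZ^n$, surjectivity of $\nabla_G$ being literally the slenderness condition). For the second equivalence, where the paper simply cites G\"obel, you supply a complete self-contained proof: the specialization $H_i=\ZZ$ gives one direction, and your two-step argument (Step 1: the restrictions $\varphi_i$ to the individual factors vanish for $i>N$, via the auxiliary map $\theta$; Step 2: $\varphi$ kills the whole tail product $P_N$, via the auxiliary map $\alpha_g$ and the comparison of $\mathbf 1$ with its truncation $\mathbf 1^{(n)}$) gives the other. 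Both steps check out --- in particular $\alpha_g$ is a genuine homomorphism since each coordinate lands in the cyclic subgroup $\langle g_i\rangle$, and you correctly identify that Step 2 is not a formal consequence of Step 1 but requires a second application of slenderness. What your approach buys is independence from the external reference (you are in effect reproving the relevant half of G\"obel's Theorem 4.3), at the cost of a page of argument where the paper spends one line; it would be natural to record your Steps 1--2 as a lemma if the paper wanted to be self-contained here.
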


The characterization of n-slender groups is slightly more complicated, and it can be
expressed in terms of the restriction of homomorphisms along the inclusion
$\HEG\hookrightarrow\hF$.

\begin{prop}
\label{prop:n-slender as co-small}
A group $G$ is n-slender if, and only if,  the composition
$$\varinjlim\Hom(F_n,G)\stackrel{\nabla_G}{\longrightarrow}\Hom(\hF,G)\stackrel{\text{res}}{\longrightarrow}
\Hom(\HEG,G)$$
is surjective.
\end{prop}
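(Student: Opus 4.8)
The plan is to unwind what surjectivity of the stated composition actually asserts and match it against the definition of n-slenderness. Recall that n-slender means every homomorphism $\HEG\to G$ factors through some projection $\HEG\to F_n$. So I would first describe the image of the composition $\varinjlim\Hom(F_n,G)\xrightarrow{\nabla_G}\Hom(\hF,G)\xrightarrow{\text{res}}\Hom(\HEG,G)$: an element of $\varinjlim\Hom(F_n,G)$ is represented by some $h\colon F_n\to G$, the map $\nabla_G$ sends it to $h\circ p_n\colon\hF\to G$, and restriction sends that to $h\circ p_n|_\HEG\colon\HEG\to G$. Since $p_n|_\HEG\colon\HEG\to F_n$ is precisely the standard projection of the Hawaiian earring group to the free group on the first $n$ letters, the image of the composition is exactly the set of homomorphisms $\HEG\to G$ that factor through one of these projections. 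Hence the composition is surjective if and only if every homomorphism $\HEG\to G$ factors through some $\HEG\to F_n$, which is the definition of n-slenderness. That is the whole content.

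Concretely, I would organize the proof as two implications. For the ``only if'' direction: assume $G$ is n-slender and take any $f\in\Hom(\HEG,G)$; by definition $f=\overline f\circ(p_n|_\HEG)$ for some $\overline f\colon F_n\to G$; then $\overline f$ represents a class in $\varinjlim\Hom(F_n,G)$ whose image under the composition is $\overline f\circ p_n|_\HEG=f$, so the composition is surjective. For the ``if'' direction: assume the composition is surjective and take any $f\in\Hom(\HEG,G)$; choose a preimage, represented by some $h\colon F_n\to G$, so that $f=h\circ p_n|_\HEG$, which exhibits $f$ as factoring through $p_n|_\HEG\colon\HEG\to F_n$; hence $G$ is n-slender.

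The only point that needs a sentence of justification, and the one I expect a referee to want spelled out, is the identification $p_n|_\HEG=$ the canonical retraction $\HEG\to F_n$ collapsing all but the first $n$ circles. This is where I would cite the standard description of $\HEG$ as a subgroup of $\hF$ (already recalled in the excerpt after Definition \ref{def:hF}, and attributed to Eda's appendix), under which the inverse-limit projections $p_n\colon\hF\to F_n$ restrict on $\HEG$ to exactly the maps induced by the projection of the Hawaiian earring onto its first $n$ circles. Given that identification, everything else is a formal diagram chase through the definitions of $\nabla_G$, the direct limit of Hom-sets, and the restriction map; there is no real obstacle. I would also remark that, unlike the situation in Theorem \ref{thm:res is injective}, here we do not need injectivity of anything — only the description of the image suffices, and injectivity of $\nabla_G$ (resp. of $\text{res}$) holds or fails according to whether $G$ is inverse limit slender, which is exactly the extra strength separating Proposition \ref{prop:n-slender as co-small} from Theorem \ref{thm:lim-slender as co-small}.

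Here is the proof:

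\begin{proof}
Recall that under the identification of $\HEG$ with a subgroup of $\hF$ (see the discussion following Definition \ref{def:hF}), the projection $p_n\colon\hF\to F_n$ restricts on $\HEG$ to the canonical retraction $q_n:=p_n|_\HEG\colon\HEG\to F_n$ induced by collapsing all but the first $n$ circles of the Hawaiian earring.

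An element of $\varinjlim\Hom(F_n,G)$ is represented by a homomorphism $h\colon F_n\to G$; the map $\nabla_G$ sends its class to $h\circ p_n\colon\hF\to G$, and $\text{res}$ then sends that to $h\circ q_n\colon\HEG\to G$. Hence the image of the displayed composition is precisely the set of those $f\in\Hom(\HEG,G)$ that factor through some $q_n$.

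If $G$ is n-slender, then every $f\in\Hom(\HEG,G)$ factors as $f=\overline f\circ q_n$ for some $n$ and some $\overline f\colon F_n\to G$; the class of $\overline f$ in $\varinjlim\Hom(F_n,G)$ maps to $f$, so the composition is surjective. Conversely, if the composition is surjective, then every $f\in\Hom(\HEG,G)$ lies in its image, hence factors through some $q_n$, so $G$ is n-slender.
\end{proof}
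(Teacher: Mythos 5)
Your proof is correct and follows essentially the same route as the paper's, which simply observes that $\mathrm{res}(\nabla_G(\overline\varphi))=\overline\varphi\circ p_i|_\HEG$ and that n-slenderness is exactly the statement that every homomorphism $\HEG\to G$ arises this way. Your version spells out the identification $p_n|_\HEG$ with the canonical retraction and the description of the image of the composition, which the paper leaves implicit, but there is no difference in substance.
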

\begin{proof}
If $G$ is n-slender, then for every homomorphism $\varphi\colon\HEG\to G$ there exists a homomorphism
$\overline\varphi\colon F_i\to G$, such that $\overline\varphi\circ p_i=\varphi$ or in other words,
$\text{res}(\nabla_G(\overline\varphi))=\varphi$. The converse statement is obvious.
\end{proof}

We can use the above to characterize n-slender groups among inverse limit slender groups.

\begin{cor}
Let $G$ be an inverse limit slender  group.
Then $G$ is n-slender if, and only if the restriction $\Hom(\hF,G)\to \Hom(\HEG,G)$
is a bijection.
\end{cor}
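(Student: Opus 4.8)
The plan is to deduce this corollary from the characterization of n-slender groups in Proposition \ref{prop:n-slender as co-small} together with the injectivity result of Theorem \ref{thm:res is injective}. First I would observe that the restriction map $\mathrm{res}\colon\Hom(\hF,G)\to\Hom(\HEG,G)$ always factors the composition $\mathrm{res}\circ\nabla_G$ appearing in Proposition \ref{prop:n-slender as co-small}; since $G$ is assumed inverse limit slender, Theorem \ref{thm:lim-slender} (in the form of Theorem \ref{thm:lim-slender as co-small}(3)) tells us that $\nabla_G\colon\varinjlim\Hom(F_n,G)\to\Hom(\hF,G)$ is a bijection. Hence the composition $\mathrm{res}\circ\nabla_G$ is surjective if and only if $\mathrm{res}$ itself is surjective.

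Next I would handle the two implications. If $G$ is n-slender, then by Proposition \ref{prop:n-slender as co-small} the composition $\mathrm{res}\circ\nabla_G$ is surjective, so by the previous paragraph $\mathrm{res}$ is surjective; and $\mathrm{res}$ is injective by Theorem \ref{thm:res is injective} (which applies since $\HEG$ is a subgroup of $\hF$ containing $F_\infty$, as noted in the discussion preceding that theorem). Therefore $\mathrm{res}$ is a bijection. Conversely, if $\mathrm{res}\colon\Hom(\hF,G)\to\Hom(\HEG,G)$ is a bijection, then in particular it is surjective, and composing with the bijection $\nabla_G$ shows the composition of Proposition \ref{prop:n-slender as co-small} is surjective, so $G$ is n-slender.

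I do not expect any serious obstacle here: the only subtlety is making sure that $\HEG$ really does contain $F_\infty$ so that Theorem \ref{thm:res is injective} is applicable, and that the map called $\mathrm{res}$ in Theorem \ref{thm:res is injective} is literally the same map as the one named $\mathrm{res}$ in Proposition \ref{prop:n-slender as co-small}; both facts are already established in the text. The proof is thus a short two-line argument chaining together the bijection $\nabla_G$ (from inverse limit slenderness), the surjectivity criterion of Proposition \ref{prop:n-slender as co-small}, and the injectivity of $\mathrm{res}$ from Theorem \ref{thm:res is injective}.
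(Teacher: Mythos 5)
Your argument is correct and is essentially identical to the paper's proof: both rely on the bijectivity of $\nabla_G$ (from inverse limit slenderness via Theorem \ref{thm:lim-slender as co-small}), the injectivity of $\mathrm{res}$ from Theorem \ref{thm:res is injective}, and the surjectivity criterion of Proposition \ref{prop:n-slender as co-small}. No issues.
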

\begin{proof}
Since $G$ is inverse limit slender, $\nabla_G\colon\varinjlim\Hom(F_n,G)\to\Hom(\hF,G)$ is bijective
by Theorem \ref{thm:lim-slender as co-small} and
$\text{res}\colon \Hom(\hF,G)\to \Hom(\HEG,G)$ is injective by Theorem \ref{thm:res is injective}.
It is thus sufficient to observe that, by Proposition \ref{prop:n-slender as co-small}, $G$ is n-slender
if, and only if, $\text{res}$ is surjective.
\end{proof}

\section{Homology of Barratt-Milnor examples}
\label{sec:Homology of Barratt-Milnor examples}




Let $\HH_n$ be the one-point union of a sequence of spheres in $\RR^{n+1}$
whose radii converge to zero (so that $\HH_1$ is just the Hawaiian earring), and
let $\mathbb S^n$ denote the unit sphere in $\RR^{n+1}$.  Eda and Kawamura  \cite{EdaKawamura2000}
showed that for $n>1$ the group $\pi_n(\HH_n)$ is isomorphic to $\ZZ^\NN$ by a homomorphism taking $\mathbf e_i\in\ZZ^\NN$
to the homotopy class of embedding of  $\mathbb S^n$ onto the $i$-th sphere of $\HH_n$.

Barratt and Milnor \cite{BarrattMilnor1962} showed that $H_l(\HH_n)$ was uncountable for infinitely many $l$.   Here we will further investigate this group by showing that they are cotorsion.

\begin{defn}
  An abelian group $G$ is \emph{cotorsion} if it is a direct summand of every extension by a torsion-free group, i.e. $\operatorname{Ext}(A, G) =0$ for all torsion-free groups $A$.  
  A group $G$ is \emph{Higman-complete} if for any sequence $(g_i)$ of elements of $G$ and for a given sequence of words $(w_i)$ in two symbols, there exists a sequence $(h_i)$ of elements of $G$ such that all the equations $h_i = w_i(f_i, h_{i+1})$ hold simultaneously.
\end{defn}

Herfort and Hojka proved that the class of cotorsion groups is the same as the class of abelian Higman-complete groups  \cite[Theorem 3]{HerfortHojka2017}.

\begin{lem}\label{lem: isom}
For $l>n>1$, the retraction $q_k\colon \HH_n\to \HH_n$ that collapses the first $k$ spheres to the base
point induces the identity map on $H_l(\HH_n)$.
\end{lem}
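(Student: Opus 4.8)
My plan is to express $q_k$ as a composite of two maps whose behaviour on $H_l$ is transparent once $l>n$. Let $S_1,S_2,\dots$ denote the spheres of $\HH_n$, put $A_k=S_1\vee\cdots\vee S_k$, and let $B_k\subseteq\HH_n$ be the closed subspace consisting of the wedge point together with all the remaining spheres, so that $\HH_n=A_k\vee B_k$ (and, incidentally, $B_k$ is again homeomorphic to $\HH_n$). By construction the image of $q_k$ is $B_k$ and $q_k$ restricts to the identity on $B_k$, so $q_k$ factors as
$$\HH_n\xrightarrow{\ c\ }B_k\xrightarrow{\ j\ }\HH_n,$$
where $c$ is the retraction collapsing $A_k$ to the wedge point, $j$ is the inclusion, and $c\circ j=\mathrm{id}_{B_k}$.

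The first thing to check is that $(\HH_n,B_k)$ is a good pair. Since the wedge point is a $0$-cell of the finite CW complex $A_k$, the inclusion $*\hookrightarrow A_k$ is a cofibration; in the pushout square exhibiting $\HH_n=A_k\vee B_k$ the map $j\colon B_k\hookrightarrow\HH_n$ is the pushout of $*\hookrightarrow A_k$ along $*\hookrightarrow B_k$, hence is itself a cofibration. Consequently $H_l(\HH_n,B_k)\cong\widetilde H_l(\HH_n/B_k)=\widetilde H_l(A_k)$ for every $l$. Because $A_k$ is a finite wedge of $n$-spheres, $\widetilde H_l(A_k)=0$ for all $l\ne n$; in particular $H_l(\HH_n,B_k)$ and $H_{l+1}(\HH_n,B_k)$ both vanish once $l>n$.

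Feeding this into the long exact sequence of the pair,
$$H_{l+1}(\HH_n,B_k)\longrightarrow H_l(B_k)\xrightarrow{\ j_*\ }H_l(\HH_n)\longrightarrow H_l(\HH_n,B_k),$$
shows that $j_*\colon H_l(B_k)\to H_l(\HH_n)$ is an isomorphism for every $l>n$. Combined with $c_*\circ j_*=\mathrm{id}$, this forces $c_*=(j_*)^{-1}$, and therefore $(q_k)_*=j_*\circ c_*=\mathrm{id}_{H_l(\HH_n)}$, which is exactly the assertion.

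The one delicate step is the first: because $\HH_n$ fails to be locally contractible at the wedge point, one cannot invoke excision or the quotient formula $H_*(X,A)\cong\widetilde H_*(X/A)$ naively, and it is precisely the finiteness of $A_k$ — which makes $*\hookrightarrow A_k$, and hence $B_k\hookrightarrow\HH_n$, a cofibration — that rescues the argument. Everything else is a formal consequence of the long exact sequence of a pair and the homology of a finite wedge of spheres. (It is worth keeping in mind that $q_k$ is genuinely not homotopic to the identity — e.g.\ it kills the first $k$ coordinates of $\pi_n(\HH_n)\cong\ZZ^\NN$ — so the content of the lemma is that these differences become invisible to $H_l$ in the range $l>n$.)
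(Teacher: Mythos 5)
Your proof is correct. Both you and the paper reduce the lemma to the same key fact --- that the inclusion $j$ of the sub-earring $B_k$ of remaining spheres into $\HH_n$ induces an isomorphism on $H_l$ for $l>n$ --- and then conclude formally from $q_k=j\circ c$ with $c\circ j=\mathrm{id}_{B_k}$. The difference lies in how that isomorphism is established: the paper applies Mayer--Vietoris to the cover by $A=\HH_n\setminus\{p_1,\dots,p_k\}$ (which deformation retracts onto $B_k$) and $B=\bigcup_{i\le k}S_i\setminus\{x_0\}$, using that $A\cap B$ is homotopy equivalent to a disjoint union of $(n-1)$-spheres and that $H_l(B)=0$; you instead observe that $B_k\hookrightarrow\HH_n$ is a closed cofibration (being a pushout of $*\hookrightarrow A_k$), identify $H_*(\HH_n,B_k)$ with $\widetilde H_*(A_k)$, and read the isomorphism off the long exact sequence of the pair. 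Both routes correctly sidestep the failure of local contractibility of $\HH_n$ at the wedge point --- the paper by puncturing the first $k$ spheres away from that point, you by exploiting the finiteness of $A_k$ to get the cofibration --- and your closing remark that $q_k$ is not homotopic to the identity (it kills the first $k$ coordinates of $\pi_n(\HH_n)\cong\ZZ^\NN$) is a useful sanity check on why the restriction $l>n$ is genuinely needed.
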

\begin{proof}
Let $A:= \HH_n \backslash \{p_1,\cdots, p_k\}$ where $p_i$ is any point other than the base point
of the $i$-th sphere in $\HH_n$.  Notice that $A$ deformation retracts to the image of $q_k$.  Let $B:=
\bigcup\limits_{i=1}^k S_i\backslash x_0$ where $S_i$ is the $i$-th sphere in $\HH_n$  and $x_0$ is the
base point.  Then the Mayer-Vietoris sequence gives us the following exact sequence
$$\cdots H_l(A\cap B) \to H_l(A)\oplus H_l(B) \to H_l(A\cup B) \to H_{l-1}(A\cap B)\to \cdots. $$
Since $A\cap B$ is homotopy equivalent to the disjoint union of spheres with dimension $n-1$, we have that
$H_l(A\cap B)$, $H_{l-1}(A\cap B)$ are trivial.  Thus the middle homomorphism is an isomorphism.  Since
$H_l(B)$ is trivial, the middle isomorphism is induced by inclusion.  Thus the retract $q_k$ induces the
identity homomorphism.
\end{proof}

Let us denote by $h\colon \pi_l(\HH_n) \to H_l(\HH_n)$ the standard Hurewicz homomorphism.

\begin{lem}\label{lem: nice maps} Let $l>n>1$.
For every sequence of elements $(b_i)$ in $\pi_l(\HH_n)$ there exits a homomorphism
$\varphi\colon \ZZ^\NN \to H_l(\HH_n)$ such that $\varphi(\mathbf e_i) = h(b_i)$.
\end{lem}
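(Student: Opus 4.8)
The plan is to realize the desired homomorphism $\varphi$ by "gluing" the representatives $b_i$ along the spheres of $\HH_n$ and using Lemma \ref{lem: isom} to control the contributions of the tail. More precisely, let $q_k\colon\HH_n\to\HH_n$ be the retraction collapsing the first $k$ spheres, and let $r_k\colon\HH_n\to\HH_n$ be the complementary retraction collapsing all spheres past the $k$-th (so $\HH_n$ is, up to homotopy, the "join-like" union of the image of $r_k$, a finite wedge of $n$-spheres, and the image of $q_k$). For each $i$ choose a map $f_i\colon\mathbb S^l\to S_i\subseteq\HH_n$ representing $b_i$, where $S_i$ is the $i$-th sphere; note $f_i$ has image in a single sphere. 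Since the spheres shrink to the basepoint, for any sequence $(n_i)$ of integers the "infinite concatenation" $\bigvee_i (n_i\cdot f_i)$ converges and defines a based map $\mathbb S^l\to\HH_n$; sending $(n_i)\in\ZZ^\NN$ to the homology class of this map will be the candidate $\varphi$. First I would check this is a well-defined homomorphism: this is the standard argument that infinitary sums in $\pi_l(\HH_n)$ (or directly in $H_l(\HH_n)$, using that homology commutes with the relevant colimits of compact pieces) are well-behaved, exactly as in the Eda–Kawamura description of $\pi_n(\HH_n)\cong\ZZ^\NN$ cited above.

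The key step where Lemma \ref{lem: isom} enters is verifying that $\varphi(\mathbf e_i)=h(b_i)$ — that is, that the class of the infinite concatenation, restricted to the $i$-th coordinate, records exactly $h(b_i)$ and nothing more. The point is that $\mathbf e_i$ is represented by $f_i$ itself, which is supported on the single sphere $S_i$, so $\varphi(\mathbf e_i)=h([f_i])=h(b_i)$ directly; there is nothing to collapse. The role of Lemma \ref{lem: isom} is instead in the well-definedness and homomorphism checks: when one compares $\varphi((n_i))$ with the finite partial sum coming from the first $k$ coordinates, the difference is a class supported (up to homotopy) on $q_k(\HH_n)$, and Lemma \ref{lem: isom} says that the inclusion $q_k(\HH_n)\hookrightarrow\HH_n$ induces the identity on $H_l$, so there is no loss of information in passing to the limit and additivity over coordinates is exact. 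Concretely, I would decompose a map representing $\varphi((n_i))$ via the Mayer–Vietoris / wedge decomposition $\HH_n\simeq\big(\bigvee_{i\le k}S_i\big)\vee q_k(\HH_n)$ used in the proof of Lemma \ref{lem: isom}, obtaining $H_l(\HH_n)\cong\bigoplus_{i\le k}H_l(S_i)\oplus H_l(q_k(\HH_n))$ on the relevant summands, and read off that the first $k$ coordinates contribute $\sum_{i\le k}n_i h(b_i)$.

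The main obstacle I anticipate is the convergence and well-definedness of the infinite concatenation at the level of singular homology: one must be careful that the assignment $(n_i)\mapsto\big[\bigvee_i n_i f_i\big]$ does not depend on the chosen representatives $f_i$ of the $b_i$, and that it is additive, i.e. that $\big[\bigvee_i (m_i+n_i)f_i\big]=\big[\bigvee_i m_i f_i\big]+\big[\bigvee_i n_i f_i\big]$ in $H_l(\HH_n)$. This is where the genuine content lies, and it is handled exactly by the "arbitrarily small representatives" principle: any homology class between two candidate representatives is supported on arbitrarily far tails $q_k(\HH_n)$, and compactness of singular chains together with Lemma \ref{lem: isom} forces such a class to be detected on a finite stage and hence to vanish if it vanishes there. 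Once these verifications are in place, $\varphi\colon\ZZ^\NN\to H_l(\HH_n)$ with $\varphi(\mathbf e_i)=h(b_i)$ is immediate, since $\varphi$ was built precisely to send the standard basis vectors to the $h(b_i)$.
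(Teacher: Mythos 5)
Your construction rests on the assumption that each $b_i$ can be represented by a map $f_i\colon\mathbb S^l\to S_i$ with image in the single sphere $S_i$. That is false in general: the lemma concerns an \emph{arbitrary} sequence in $\pi_l(\HH_n)$, and for $l>n$ this group is not the union of the images of the $\pi_l(S_i)$ (it contains, for instance, Whitehead products mixing several spheres, as well as genuinely infinitary classes). Once the $f_i$ are allowed to have image spread over all of $\HH_n$, your infinite concatenation is no longer continuous, because the images of the tail maps need not shrink to the basepoint. This is exactly the difficulty the paper's proof is designed to handle: one replaces $f_i$ by $q_i\circ f_i$, whose image lies in $q_i(\HH_n)$ and hence does converge to the wedge point, so that the assignments $S_i\mapsto q_i\circ f_i$ assemble into a continuous map $p\colon\HH_l\to\HH_n$; the price is that $q_i\circ f_i$ represents $q_{i*}(b_i)$ rather than $b_i$, and Lemma \ref{lem: isom} is invoked precisely to see that $h\bigl(q_{i*}(b_i)\bigr)=h(b_i)$. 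So your reading of the role of Lemma \ref{lem: isom} (``there is nothing to collapse'' when computing $\varphi(\mathbf e_i)$) is inverted: in the correct argument that identity is the one and only place the lemma is needed.

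A second, smaller issue: you build the infinite concatenation as a based map out of $\mathbb S^l$ and then must verify by hand that $(n_i)\mapsto\bigl[\bigvee_i n_i f_i\bigr]$ is well defined, independent of representatives, and additive. For $l\ge 2$ the clean way to obtain all of this at once --- and the way the paper proceeds --- is to take the domain to be $\HH_l$ and use the Eda--Kawamura isomorphism $\pi_l(\HH_l)\cong\ZZ^\NN$: the sphere-by-sphere prescription $p|_{S_i}=q_i\circ f_i$ defines a single continuous map $p\colon\HH_l\to\HH_n$, and $\varphi=h\circ p_*$ is then a homomorphism for free by functoriality, so the convergence, well-definedness and additivity worries you flag in your last paragraph evaporate. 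I would restructure the proof along those lines.
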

\begin{proof}
Let $(b_i)$ be a sequence of elements in $\pi_l(\HH_n)$ and for each $i$ let $f_i\colon
\mathbb S^l \to \HH_n$ be a representative of $b_i$.  Then we can define a continuous map
$p\colon \HH_l \to \HH_n$ by $p|_{S_i} = q_i\circ f_i$, where $S_i$
is the $i$-th sphere of $\HH_l$.  Since $\im(q_i)$ converge to the wedge point, $p$ is a continuous map and
induces a homomorphism $p_*\colon \ZZ^\NN \to \pi_l(\HH_n)$.  If $\iota_i: \mathbb S^l \to \HH_l$ is the
embedding that corresponds to $\mathbf e_i\in \ZZ^\NN$ under the identification of
$\pi_l(\HH_l)$  with $\ZZ^\NN$, then
 \begin{align*}
    h\circ p_*\bigl([\iota_i]\bigr)  & = h\bigl([p\circ \iota_i]\bigr) = h\bigr([q_i\circ f_i]\bigr) \\
     & = h\circ q_{i*}\bigl([f_i]\bigr) = q_{i*}\circ h\bigl([f_i]\bigr) = h\bigl([f_i]\bigr)
  \end{align*}
where the equality follow from Lemma \ref{lem: isom} and from the naturality of $q_i$ and $h$.
Thus  $h\circ p_*$ sends $\mathbf e_i$ to $h(b_i)$ as claimed.
\end{proof}

\begin{thm}\label{thm: no hom}
For $l>n>1$ and let $G$ be a \limslender\ group. Then the homomorphism
$$\Hom\bigl(H_l(\HH_n), G\bigr)\longrightarrow\Hom\bigl(\pi_l(\HH_n), G\bigr),$$
induced by the Hurewicz homomorphism, is trivial.

In particular,  $\Hom\bigl(H_{n+1}(\HH_n), G\bigr) $ is trivial for all $n$.
\end{thm}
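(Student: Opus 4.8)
The plan is to derive a contradiction from Lemma \ref{lem: nice maps} by feeding it a constant sequence. Suppose the displayed map is nontrivial: there is a homomorphism $\psi\colon H_l(\HH_n)\to G$ together with an element $b\in\pi_l(\HH_n)$ such that $\psi\bigl(h(b)\bigr)\ne 1$. First I would apply Lemma \ref{lem: nice maps} to the constant sequence $(b_i)$ with $b_i=b$, obtaining a homomorphism $\varphi\colon\ZZ^\NN\to H_l(\HH_n)$ with $\varphi(\mathbf e_i)=h(b)$ for all $i$. Composing, $\psi\circ\varphi\colon\ZZ^\NN\to G$ is a homomorphism carrying every standard generator $\mathbf e_i$ to the fixed nontrivial element $\psi(h(b))$.

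Next I would invoke slenderness of $G$ (which holds since every inverse limit slender group is slender, as noted earlier): the homomorphism $\psi\circ\varphi$ factors through some coordinate projection $\pr_m\colon\ZZ^\NN\to\ZZ^m$. But $\mathbf e_i\in\ker\pr_m$ for every $i>m$, so $\psi(h(b))=\psi\bigl(\varphi(\mathbf e_i)\bigr)=1$, contradicting the choice of $b$. Hence $\psi\circ h=1$ for every $\psi\in\Hom(H_l(\HH_n),G)$, that is, the induced map $\Hom(H_l(\HH_n),G)\to\Hom(\pi_l(\HH_n),G)$ is trivial.

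For the final assertion I would take $l=n+1$. It suffices to know that the Hurewicz homomorphism $h\colon\pi_{n+1}(\HH_n)\to H_{n+1}(\HH_n)$ is surjective, because then precomposition with $h$ embeds $\Hom(H_{n+1}(\HH_n),G)$ into $\Hom(\pi_{n+1}(\HH_n),G)$, whose image is trivial by the first part, forcing $\Hom(H_{n+1}(\HH_n),G)=0$. Surjectivity of $h$ in degree $n+1$ is a standard consequence of the Hurewicz theorem once one knows that $\HH_n$ is $(n-1)$-connected for $n>1$; this connectivity is classical (see \cite{BarrattMilnor1962,EdaKawamura2000}). This quotation of the connectivity fact is the only nonformal ingredient, and the mildest obstacle; the heart of the theorem is the short slenderness argument above, so I do not anticipate any real difficulty.
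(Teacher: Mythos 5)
Your proposal is correct and follows essentially the same route as the paper: apply Lemma \ref{lem: nice maps} to the constant sequence $(b)$, compose with the putative homomorphism into $G$, and use slenderness of $G$ (equivalently, lim-slenderness applied to $\ZZ^\NN=\varprojlim\ZZ^n$) to force $\psi(h(b))=1$, with the last assertion following from surjectivity of the Hurewicz map in degree $n+1$. The only cosmetic difference is that you justify that surjectivity via $(n-1)$-connectivity of $\HH_n$ and the Hurewicz theorem, where the paper simply cites it as well known.
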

\begin{proof}
Suppose that $\psi\colon \pi_l(\HH_n)\to G$ is a nontrivial homomorphism to a \limslender~ group $G$
that factors through the Hurewicz homomorphism $h$. Then there exists a homomorphism
$\overline \psi\colon H_l(\HH_n)\to G$ such
that $\overline \psi\bigl(h(b)\bigr) \neq 1$ for some $b \in \pi_l(\HH_n)$.  Then  $\overline \psi\circ
\phi(\mathbf e_i)\neq 1$ for all $i$ where $\phi$ is the homomorphism from Lemma \ref{lem: nice maps} for
the constant sequence $(b)$. This contradicts the assumption that $G$ is \limslender.  Thus the Hurewicz
homomorphism induces the trivial homomorphism from $\Hom\bigl(H_l(\HH_n), G\bigr) $ to
$\Hom\bigl(\pi_l(\HH_n), G\bigr) $ for any \limslender~ group $G$.

The last statement follows from the well-known fact that the Hurewicz homomorphism
$h\colon\pi_{n+1}(\HH_n)\to H_{n+1}(\HH_n)$ is surjective.
\end{proof}

\begin{thm}\label{thm: cotorsion}
For  $l>n>1$, the image of the Hurewicz map from $\pi_l(\HH_n)$ to $H_l(\HH_n)$ is cotorsion.  In particular, $H_{n+1}(\HH_n)$ is cotorsion for $n>1$.
\end{thm}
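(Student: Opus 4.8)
The plan is to prove that the image $I$ of the Hurewicz homomorphism $h\colon\pi_l(\HH_n)\to H_l(\HH_n)$ is Higman-complete; since $I$ is abelian, the Herfort--Hojka characterization \cite[Theorem 3]{HerfortHojka2017} then yields that $I$ is cotorsion. As $I$ is abelian, the value $w(a,b)$ of a word $w$ in two symbols depends only on the exponent sums $m$ and $n$ of those symbols, so that $w(a,b)=ma+nb$; hence establishing Higman-completeness of $I$ amounts to the following: for every sequence $(g_i)$ in $I$ and all integers $m_i,n_i$ $(i\in\NN)$ there is a sequence $(h_i)$ in $I$ with $h_i=m_ig_i+n_ih_{i+1}$ for all $i$.

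To produce such a sequence I would first write each $g_i$ as $g_i=h(b_i)$ for some $b_i\in\pi_l(\HH_n)$ and apply Lemma \ref{lem: nice maps} to the sequence $(b_i)$, obtaining a homomorphism $\varphi\colon\ZZ^\NN\to H_l(\HH_n)$ with $\varphi(\mathbf e_i)=h(b_i)=g_i$. The crucial observation is that the homomorphism furnished by that lemma has the form $h\circ p_*$, so its image is contained in $I$; in particular $\varphi(\xi)\in I$ for every $\xi\in\ZZ^\NN$. It therefore suffices to find elements $\xi_i\in\ZZ^\NN$ satisfying the \emph{same} telescoping relation $\xi_i=m_i\mathbf e_i+n_i\xi_{i+1}$ inside $\ZZ^\NN$, since then $h_i:=\varphi(\xi_i)$ lies in $I$ and satisfies $h_i=\varphi(m_i\mathbf e_i+n_i\xi_{i+1})=m_ig_i+n_ih_{i+1}$, as desired. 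Such $\xi_i$ can be written down explicitly: let $\xi_i$ be the integer sequence whose $j$-th coordinate is $0$ for $j<i$ and $\bigl(\prod_{k=i}^{j-1}n_k\bigr)m_j$ for $j\ge i$ --- this involves no convergence, being merely a sequence of integers --- and a routine coordinate-wise check confirms the telescoping relation.

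This shows that $I$ is Higman-complete, hence cotorsion. For the final assertion, recall from the proof of Theorem \ref{thm: no hom} that the Hurewicz homomorphism $\pi_{n+1}(\HH_n)\to H_{n+1}(\HH_n)$ is surjective, so specializing to $l=n+1$ gives $I=H_{n+1}(\HH_n)$, which is therefore cotorsion. I do not anticipate a real obstacle; the two points that need care are that the homomorphism supplied by Lemma \ref{lem: nice maps} genuinely takes values in the Hurewicz image $I$ --- immediate from its construction as $h\circ p_*$, which is what makes the elements $h_i$ legitimate --- and the reduction from arbitrary words in two symbols to their exponent sums, which is valid precisely because $I$ is abelian.
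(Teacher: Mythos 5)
Your proof is correct and follows essentially the same route as the paper's: reduce cotorsion to (abelianized) Higman-completeness via Herfort--Hojka, pull the sequence back through the homomorphism $\varphi=h\circ p_*$ supplied by Lemma \ref{lem: nice maps}, and solve the telescoping system explicitly in $\ZZ^\NN$ with the elements whose coordinates are the products $\bigl(\prod_{k=i}^{j-1}n_k\bigr)m_j$ --- exactly the paper's $a_i$, in slightly greater generality. The only cosmetic difference is that you derive the reduction from arbitrary words to exponent sums directly rather than citing it.
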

\begin{proof}
By \cite[Section 2]{HerfortHojka2017} it is sufficient to show that for every sequence
of elements $(b_i)$ in $h\bigl(\pi_l(\HH_n)\bigr)$ and every
sequence of natural numbers $(n_i)$,  the infinite system of equations
$$x_i = b_i +n_{i}x_{i+1}\ \ \ i=1,2,3,\ldots$$
has a solution in $h\bigl(\pi_l(\HH_n) \bigr)$
(note that the above equations are the abelianization of  the Higman equations).
By Lemma \ref{lem: nice maps}, we can find a homomorphism
$\varphi\colon \ZZ^\NN \to H_l(\HH_n)$ such that $\phi(\mathbf e_i) = b_i$.
It is then easy to check that for
$a_i:=\bigl(\underbrace{0, \cdots, 0,}_{(i-1) - \text{times}} 1, n_i, n_in_{i+1},n_in_{i+1}n_{i+2}, \cdots
\bigr)$,
the sequence $\bigl(\varphi(a_i)\bigr)$ is a solution to the above system of equations in
$h\bigl(\pi_l(\HH_n)\bigr)$.
\end{proof}

As we already mentioned in the Introduction, the groups $H_l(\HH_n)$ are often very big.
The precise statement, proved by Barratt and Milnor \cite{BarrattMilnor1962} is that
the image of the rational Hurewicz homomorphism $\pi_l(\HH_n)\to H_l(\HH_n;\QQ)$ is uncountable
whenever $l\equiv 1\ \mathrm{mod} (n-1)$. Thus the above theorem (in particular the case $l=n+1$)
lead us to believe that more is true:

\begin{conj}
For  $l>n>1$ all groups  $H_l(\HH_n)$ are cotorsion.
\end{conj}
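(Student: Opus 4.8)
The plan is to follow the template of Theorem~\ref{thm: cotorsion}, but to make it work for \emph{arbitrary} (not necessarily spherical) homology classes. Invoking the equivalence between cotorsion and abelian Higman-complete groups \cite[Theorem 3]{HerfortHojka2017}, it suffices to show that for every sequence $(b_i)$ in $H_l(\HH_n)$ and every sequence of natural numbers $(n_i)$ the system
$$x_i = b_i + n_i x_{i+1},\ \ \ i=1,2,3,\ldots$$
has a solution in $H_l(\HH_n)$. Exactly as in Theorem~\ref{thm: cotorsion}, this follows once we produce a single homomorphism $\varphi\colon\ZZ^\NN\to H_l(\HH_n)$ with $\varphi(\mathbf e_i)=b_i$: the explicit sequence $a_i=(0,\dots,0,1,n_i,n_in_{i+1},\dots)$ is then sent by $\varphi$ to a solution. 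Thus the entire problem reduces to upgrading Lemma~\ref{lem: nice maps} from the Hurewicz image $h(\pi_l(\HH_n))$ to all of $H_l(\HH_n)$.

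First I would set up the geometric scaffolding. Represent each $b_i$ by a finite singular cycle and let $g_i\colon K_i\to\HH_n$ be the induced map from the associated compact $\Delta$-complex $K_i$, carrying a fundamental cycle $\zeta_i$ with $(g_i)_*[\zeta_i]=b_i$. Post-composing with a collapse $q_m$ pushes the image into the spheres of index $>m$, whose union with the base point has diameter tending to $0$; by Lemma~\ref{lem: isom} this leaves the induced homology class unchanged. The integer multiplicities $n_in_{i+1}\cdots$ that appear on unwinding the system cannot in general be realized by self-maps of $K_i$ (there is no analogue of degree for a general complex), so instead I would realize a coefficient $N$ on $b_j$ by placing $N$ separate copies of $q_m\circ g_j$ in pairwise disjoint deep spheres: since disjoint supports add in homology and each copy induces $b_j$, their sum induces $N\,b_j$. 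Assembling all copies, with indices chosen so that every image has diameter tending to $0$, yields a single continuous map $P\colon W\to\HH_n$ out of a shrinking wedge $W=\bigvee_i K_i'$ of copies of the $K_i$.

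The crux, and the step I expect to be the main obstacle, is to realize the required \emph{infinite} combinations as genuine singular classes. Concretely, I would need a homomorphism $\ZZ^\NN\to H_l(W)$ sending $\mathbf e_i$ to the image in $H_l(W)$ of the fundamental class of the $i$-th piece; composing with $P_*$ would then give $\varphi$. In the spherical case of Lemma~\ref{lem: nice maps} this homomorphism is supplied for free by the Eda--Kawamura identification $\pi_l(\HH_l)\cong\ZZ^\NN$ \cite{EdaKawamura2000}, because each sphere's fundamental class is spherical and an infinite combination is represented by a single map out of $\HH_l$. For a general compact complex $K_i$ the fundamental class $[\zeta_i]$ need not lie in the image of $\pi_l(K_i)$, so there is no map from a sphere, a wedge of spheres, or from $\HH_l$ realizing it, and the argument of Lemma~\ref{lem: nice maps} collapses. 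What is therefore required is a shrinking-wedge analogue of Eda--Kawamura valid for arbitrary compact $l$-complexes: that the infinite formal combinations $\sum_i c_i[\zeta_i]$ are realized by honest singular classes in $H_l(\bigvee_i K_i)$. Equivalently, one needs control of the singular homology of the wild space $\bigvee_i K_i$ assembled from arbitrary compact pieces, which is exactly the delicacy that makes Barratt--Milnor spaces hard.

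To attack this obstacle I see three plausible routes. One is to establish a Milnor-type exact sequence relating $H_l(\bigvee_i K_i)$ to $\prod_i\tilde H_l(K_i)$ together with a $\varprojlim^1$ correction term, generalizing known computations for the Hawaiian earring; surjectivity onto $\prod_i\tilde H_l(K_i)$ would immediately furnish the needed classes. A second is to show directly that every class of $H_l(\HH_n)$ admits a representative supported in a single small sphere of an auxiliary $\HH_l$, thereby reducing to the spherical case---this, however, appears to demand precisely the non-spherical input we lack. A third, purely algebraic, route is to exploit Barratt and Milnor's explicit description \cite{BarrattMilnor1962} of $H_l(\HH_n)$ as an iterated infinite-product construction and to verify Higman-completeness on that description. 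In every route the essential difficulty is the same: unlike spheres, general homology classes carry no degree-type operations, so the passage from finitely supported to infinitely supported combinations cannot be performed cell-by-cell and must instead be produced by a single global map or by a homological limit argument on a wild space.
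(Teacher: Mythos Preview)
The statement you are attempting to prove is a \emph{conjecture} in the paper, not a theorem: the authors do not prove it and offer no proof sketch. What the paper does prove is the weaker Theorem~\ref{thm: cotorsion} (the image of the Hurewicz map is cotorsion), and the conjecture is then stated as a belief motivated by that result together with Barratt--Milnor's uncountability theorem. So there is no ``paper's own proof'' to compare your proposal against.

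That said, your diagnosis of where the argument of Theorem~\ref{thm: cotorsion} breaks down is exactly right, and it is precisely the reason the authors leave the statement open. The entire machinery of Lemmas~\ref{lem: isom} and~\ref{lem: nice maps} rests on the Eda--Kawamura identification $\pi_l(\HH_l)\cong\ZZ^\NN$, which only produces homomorphisms $\ZZ^\NN\to H_l(\HH_n)$ hitting \emph{spherical} classes. Extending this to arbitrary classes --- equivalently, constructing a map $\ZZ^\NN\to H_l(\HH_n)$ with $\mathbf e_i\mapsto b_i$ for an arbitrary sequence $(b_i)$ --- is the genuine open problem, and you have correctly isolated it.

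On your three proposed routes: the first (a Milnor-type sequence for shrinking wedges of arbitrary finite complexes) is a reasonable line of attack, though controlling singular homology of such wild wedges is known to be subtle. The second you rightly flag as circular. For the third, be aware that Barratt and Milnor do \emph{not} give an explicit description of $H_l(\HH_n)$; they only exhibit uncountably many classes in the rational Hurewicz image for $l\equiv 1\pmod{n-1}$, so there is less algebraic structure to work with than your phrasing suggests. In short, your proposal is an honest and well-aimed research outline rather than a proof, and the paper itself does not go further.
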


\section{Applications to shape groups and \v Cech cohomology}
\label{sec:Applications to shape groups and Cech cohomology}

Classical homotopy and (co)homology groups are best suited for the study of simplicial or CW-complexes.
For more general spaces like metric compacta or Peano continua with bad local properties these invariants
fail to give useful information about their homotopy type. Classical examples include
Warsaw circle, Sierpinski gasket, Hawaiian Earring, $p$-adic solenoids, attractors of dynamical
systems, boundaries of groups and many others. The main problem is that there may not be sufficiently many
maps from polyhedra (e.g. spheres) to such spaces to be able to distinguish between them.
A standard way around this problem
is to approximate a given space by a sequence of nicer spaces like polyhedra or absolute neighbourhood
retracts. This is the so-called \emph{shape theory} approach, for which we refer to the classical
monograph \cite{MardesicSegal1982} by Mardesic and Segal.

Our main interest will be in compact, connected metric spaces that are also locally connected.
Such spaces are classically known as \emph{Peano continua} and form a quite general class of spaces,
which are still sufficiently nice to allow a rich structure theory. For every Peano continuum $X$ there
exists an inverse sequence of finite polyhedra $(X_i,p_i^j)$, such that $X=\varprojlim(X_i,p_i^j)$.
These approximations are unique up to a homotopy equivalence of inverse sequences
(see \cite[Appendix 1]{MardesicSegal1982}) and can be used to define homotopy invariants of $X$.

Let $x_0$ be a base-point in $X=\varprojlim X_i$ and let $k\ge 1$. Then for every $i$ take
$p_i(x_0)$ as a base-point in $X_i$ and define the $k$-th \emph{shape homotopy group} of $X$ as
$\check\pi_k(X,x_0):=\varprojlim \pi_k(X_i,p_i(x_0))$. This definition make sense for $k=0$ as well,
but $\check\pi_0(X,x_0)$ is in general only a pointed set. The \emph{shape fundamental group}
$\check\pi_1(X,x_0)$ can be a non-commutative group, while all higher shape homotopy groups are abelian.
Note that there is a natural homomorphism $\pi_1(X,x_0)\to \check\pi_1(X,x_0)$ which is often injective
(e.g., if $X$ is 1-dimensional),
and that for $\pi_1(X,x_0)$ there holds a famous Shelah's dichotomy \cite{Shelah1988}: they are either
finitely presented or uncountable. In spite of that we have the following result.

\begin{thm}\label{cor:hom check}
Let $X$ be a Peano continuum with a representation $X=\varprojlim X_i$ as a limit of an inverse sequence
of finite polyhedra,  and let $G$ be an inverse limit slender group.
Then $\Hom\bigl( \check{\pi}_1(X),G\bigr) = \varinjlim \Hom\bigl(\pi_1(X_i),G\bigr)$.
In particular, if $G$ is countable, then $\Hom\bigl( \check{\pi}_1(X),G\bigr)$ is also countable.
\end{thm}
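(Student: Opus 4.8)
The plan is to identify $\check\pi_1(X)$ as the inverse limit of a Mittag--Leffler tower of finitely generated groups and then invoke the defining property of inverse limit slender groups. By definition $\check\pi_1(X,x_0)=\varprojlim\bigl(\pi_1(X_i,p_i(x_0)),(p_i^j)_*\bigr)$, and each $\pi_1(X_i)$ is finitely generated since $X_i$ is a finite polyhedron. The one place where the hypothesis ``Peano continuum'' is needed is to guarantee that this tower is an ML-sequence: every Peano continuum is pointed $1$-movable (a classical fact, see \cite{MardesicSegal1982}), so its shape fundamental pro-group is movable, and a movable tower of groups satisfies the Mittag--Leffler condition. Indeed, if $j\ge i$ is a movability index for $i$, then for each $k\ge j$ there is a homomorphism $r\colon\pi_1(X_j)\to\pi_1(X_k)$ with $(p_i^k)_*\circ r=(p_i^j)_*$; combined with the always-valid inclusion $\im (p_i^k)_*\subseteq\im (p_i^j)_*$ this forces $\im (p_i^k)_*=\im (p_i^j)_*$ for all $k\ge j$, which is precisely the ML condition.

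Granting this, I would analyse the canonical map $\nabla_G\colon\varinjlim\Hom(\pi_1(X_i),G)\to\Hom(\check\pi_1(X),G)$. Surjectivity is immediate from the definition of inverse limit slender: since the tower is ML, any $\varphi\colon\check\pi_1(X)\to G$ factors as $\varphi=\overline\varphi\circ p_i$ for some $i$ and some $\overline\varphi\colon\pi_1(X_i)\to G$, so $\overline\varphi$ is a preimage of $\varphi$. For injectivity I would use the standard fact that for an ML tower the stable image $\im (p_i^k)_*$ ($k\gg i$) coincides with $\im\bigl(p_i\colon\check\pi_1(X)\to\pi_1(X_i)\bigr)$: if $\psi,\psi'\colon\pi_1(X_i)\to G$ satisfy $\psi\circ p_i=\psi'\circ p_i$, they agree on $\im p_i=\im (p_i^k)_*$ for $k\gg i$, whence $\psi\circ (p_i^k)_*=\psi'\circ (p_i^k)_*$ and $\psi,\psi'$ already represent the same element of the colimit. (Alternatively one may quote Theorem \ref{thm:lim-slender as co-small}(4), which yields $\Hom(\check\pi_1(X),G)\cong\varinjlim\Hom(\im p_i,G)$, and then use ML to replace $\im p_i$ by $\pi_1(X_i)$ in the colimit.) This gives the asserted equality $\Hom(\check\pi_1(X),G)=\varinjlim\Hom(\pi_1(X_i),G)$.

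Finally, if $G$ is countable and $H$ is finitely generated, then $\Hom(H,G)$ is countable, since a homomorphism is determined by the images of finitely many generators; hence every $\Hom(\pi_1(X_i),G)$ is countable and the countable direct limit $\varinjlim\Hom(\pi_1(X_i),G)$ is again countable. The crux of the whole argument is the Mittag--Leffler property of the pro-$\pi_1$ of a Peano continuum; once that is secured, everything else is a formal consequence of inverse limit slenderness together with routine bookkeeping about stable images and colimits.
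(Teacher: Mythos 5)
Your proof is correct and follows essentially the same route as the paper: establish that the tower $(\pi_1(X_i),(p_i^j)_*)$ is Mittag--Leffler (the paper cites this directly from local path-connectedness via Marde\v{s}i\'c--Segal, which is the same underlying fact as your pointed $1$-movability argument) and then apply the co-smallness of inverse limit slender groups, finishing with the same countability bookkeeping. Your explicit verification that $\nabla_G$ is injective via the stable-image property of ML towers is a detail the paper delegates to Theorem \ref{thm:lim-slender as co-small}(4), so if anything your write-up is slightly more self-contained.
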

\begin{proof}
Since $X$ is locally path-connected, the inverse sequence of fundamental groups $(\pi_1(X_i),(p_i^j)_\sharp)$
satisfies the Mittag-Leffler condition (\cite[Sec. II,7.2]{MardesicSegal1982}, see also
\cite[Cor. 3.2]{ConnerHerfortKentPavesic2}). Then Theorem \ref{thm:lim-slender as co-small}(4)
implies the formula $\Hom\bigl( \check{\pi}_1(X),G\bigr) = \varinjlim \Hom\bigl(\pi_1(X_i),G\bigr)$.
Finally, if $G$ is countable, then $\Hom(\pi_1(X_i),G)$ are countable for every $i$ (because
fundamental groups of finite polyhedra are finitely presented), and so their inverse limit
$\Hom(\check\pi_1(X),G)$ must be countable as well.
\end{proof}

The last theorem can be extended to higher shape groups for compact and connected metric spaces that satisfy
a local connectivity condition called $LC^n$ for suitable $n$ (see \cite[Sec. II,7.2]{MardesicSegal1982}).
We leave the details to the reader.

Our final application is a universal coefficient theorem for \v Cech cohomology groups with coefficients
in a slender abelian group.
Every compact metric space $X$ can be represented as a limit of an inverse sequence of finite polyhedra
$X=\varprojlim X_i$. Such representation are unique up to homotopy equivalence of inverse sequences,
so one can define \emph{\v Cech homology} groups with coefficients in an abelian group $G$ as
$$\check H_n(X,G):=\varprojlim H_n(X_i;G).$$
Since the cohomology is a contravariant functor, it turns inverse sequences into direct sequences, so
one can also define \emph{\v Cech cohomology} groups with coefficients in $G$ as
$$\check H^n(X,G):=\varinjlim H^n(X_i;G).$$
\v Cech cohomology groups are a tool of choice for the study of spaces with bad local properties.
On the other hand, \v Cech homology is much less used, because in general it does not form long exact
sequences of a pair, and is thus not a homology theory in the usual sense (note however, that
it satisfies the exactness axiom if we take coefficients in a finite group or in a field
\cite[Ch. IX]{EilenbergSteenrod1952}). Nevertheless, we may use our results on inverse limit slender
groups to derive a Universal coefficient theorem for \v Cech cohomology.

We first need some algebraic lemmas.

\begin{lem}\label{lem: minimal free factor}
Let $H$ be a finitely generated free abelian group and $B$ a subgroup.  Then there exists a unique maximal
subgroup $C$ of $H$ containing $B$ such that $C/B$ is finite and $C$ is a free abelian factor of $H$.
\end{lem}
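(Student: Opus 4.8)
The plan is to take for $C$ the \emph{saturation} (pure closure) of $B$ in $H$, namely
$$C := \{\, h\in H \mid mh\in B \text{ for some integer } m\ge 1 \,\}.$$
First I would verify that $C$ is a subgroup containing $B$: it obviously contains $B$ and is closed under negation, and if $mh\in B$ and $m'h'\in B$ then $(mm')(h+h')\in B$, where commutativity of $H$ is what makes this work. By construction $C/B$ is precisely the torsion subgroup of $H/B$. Since $H$ is finitely generated, so is $H/B$, and the torsion subgroup of a finitely generated abelian group is finite; hence $C/B$ is finite.

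Next I would check that $H/C$ is torsion-free: if $h\in H$ and $mh\in C$ for some $m\ge 1$, then $m'mh\in B$ for some $m'\ge 1$, so $h\in C$ already. Being also finitely generated, $H/C$ is therefore free abelian, so the short exact sequence $0\to C\to H\to H/C\to 0$ splits and $C$ is a direct summand of $H$; moreover $C$ is itself free abelian, as a subgroup of the finitely generated free abelian group $H$. Thus $C$ is a free abelian factor of $H$, it contains $B$, and $C/B$ is finite, so $C$ has all three required properties.

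For maximality and uniqueness I would argue that \emph{any} subgroup $C'$ with $B\le C'\le H$ and $C'/B$ finite is automatically contained in $C$: every $h\in C'$ has finite order modulo $B$, so $mh\in B$ for some $m\ge 1$, i.e.\ $h\in C$. In particular this applies to every $C'$ that is also a free abelian factor, so $C$ contains every competing subgroup; combined with the fact (established above) that $C$ is itself one of the competitors, this shows $C$ is the unique maximal such subgroup.

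I do not expect a genuine obstacle in this lemma; the only point that needs a moment's attention is confirming that $C$ as defined is not merely an upper bound for the candidate subgroups but actually satisfies the three conditions itself — which is exactly what the ``torsion subgroup of $H/B$ is finite'' and ``$H/C$ is free'' observations above provide. (If one prefers, one can instead build $C$ concretely via a Smith normal form basis of $H$ adapted to $B$, but the saturation description makes uniqueness transparent and is the cleaner route.)
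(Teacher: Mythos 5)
Your proof is correct and follows essentially the same route as the paper: both take $C$ to be the saturation (pure closure) of $B$ in $H$, observe that $C/B$ is the finite torsion subgroup of $H/B$ and that $H/C$ is torsion-free hence free (so the sequence splits), and get maximality from the fact that any subgroup finite over $B$ lies in the saturation. No gaps.
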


\begin{proof}[Sketch of proof.] Let $C = \bigl\{ a\in H\mid a^k\in B \text{ for some } k\in \mathbb Z\backslash\{0\} \bigr\}$.  It is trivial to show that $C$ is actually a subgroup.  Since $C/B$ is an abelian, finitely generated, torsion group it is finite. It is an exercise to see that if $a^l \in C$ for some $a\in H$ and some nonzero integer $l$, then $a \in C$. Thus $H/C$ is a free abelian group and $C$ is a free abelian factor of $H$.

Suppose that $D/B$ is finite.  Then for every $d\in D$ there exits a $k\in \mathbb N$ such that $d^k \in B$ which implies that $D\subset C$.
\end{proof}

\begin{lem}\label{lem: free abelian}
  Every homomorphism from an inverse limit of finitely generated abelian groups to a \limslender~ group factors through an inverse limit of finitely generated free abelian groups.
\end{lem}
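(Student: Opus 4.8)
The plan is to factor $\varphi$ through the inverse limit of the torsion-free quotients of the groups $H_i$. Write the given sequence as $(H_i,p_i^j)$ with each $H_i$ finitely generated abelian, let $T_i\le H_i$ denote its torsion subgroup, and set $\bar H_i:=H_i/T_i$, a finitely generated \emph{free} abelian group. Since any homomorphism carries torsion elements to torsion elements, each bonding map restricts to $p_i^j\colon T_j\to T_i$ and hence induces a homomorphism $\bar p_i^j\colon\bar H_j\to\bar H_i$; so $(\bar H_i,\bar p_i^j)$ is an inverse sequence of finitely generated free abelian groups, and there is a canonical homomorphism $\pi\colon\varprojlim H_i\to\varprojlim\bar H_i$. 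The goal is to show that $\varphi\colon\varprojlim H_i\to G$ factors as $\overline\varphi\circ\pi$; this suffices, since $\varprojlim\bar H_i$ is an inverse limit of finitely generated free abelian groups.

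First I would record the relevant exact sequence. Applying $\varprojlim$ to the short exact sequences $0\to T_i\to H_i\to\bar H_i\to 0$ of inverse systems of abelian groups yields an exact sequence
$$0\longrightarrow\varprojlim T_i\longrightarrow\varprojlim H_i\stackrel{\pi}{\longrightarrow}\varprojlim\bar H_i\longrightarrow\varprojlim\nolimits^{1}T_i.$$
Each $T_i$ is finite, so for fixed $i$ the subgroups $p_i^k(T_k)\le T_i$ form a descending chain which must stabilize; hence $(T_i)$ satisfies the Mittag--Leffler condition, and since the first derived limit of a countable Mittag--Leffler inverse system of abelian groups vanishes (see \cite{Geoghegan2008}), $\varprojlim\nolimits^{1}T_i=0$. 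Consequently $\pi$ is surjective with kernel exactly $\varprojlim T_i$, and it remains to prove that $\varprojlim T_i\subseteq\ker\varphi$.

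This last point is the heart of the argument and the only place where the property of being \limslender{} is used. One should resist the temptation to argue that $\varprojlim T_i$ is torsion: an inverse limit of finite abelian groups can perfectly well be torsion-free (the $p$-adic integers, for instance), so torsion-freeness of $G$ alone does not settle the matter. Instead, observe that $(T_i)$ is a Mittag--Leffler sequence, so by the definition of an inverse limit slender group the restriction $\varphi|_{\varprojlim T_i}\colon\varprojlim T_i\to G$ factors as $\overline\psi\circ p_k$ for some $k$ and some homomorphism $\overline\psi\colon T_k\to G$. Since $T_k$ is finite while $G$, being \limslender{} and hence slender, is torsion-free, $\overline\psi$ is trivial; therefore $\varphi|_{\varprojlim T_i}$ is trivial as well. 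Combining this with the previous paragraph gives $\ker\pi=\varprojlim T_i\subseteq\ker\varphi$, so the surjection $\pi$ induces the required homomorphism $\overline\varphi\colon\varprojlim\bar H_i\to G$ with $\varphi=\overline\varphi\circ\pi$. The ancillary steps — functoriality of the torsion subgroup and the vanishing of the $\varprojlim\nolimits^{1}$ term — are routine, so I expect the vanishing of $\varphi$ on $\varprojlim T_i$ to be the only substantive part of the proof.
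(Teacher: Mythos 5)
Your proof is correct and follows essentially the same route as the paper's: pass to the torsion-free quotients $A_i=H_i/T_i$, use the exact sequence
$0\to\varprojlim T_i\to\varprojlim H_i\to\varprojlim A_i\to\varprojlim^1 T_i$
together with the vanishing of $\varprojlim^1 T_i$ (the $T_i$ being finite, hence Mittag--Leffler) to see that the induced map is surjective with kernel $\varprojlim T_i$, and then show $\varprojlim T_i\subseteq\ker\varphi$. The only (harmless) difference is in that last step: the paper applies Theorem \ref{thm:lim-slender}(4) to the full, possibly non-Mittag--Leffler sequence $(H_i)$ to get $\ker p_m\subseteq\ker\varphi$ and then uses torsion-freeness of $G$ to enlarge this to $\ker(q_m\circ p_m)\supseteq\varprojlim T_i$, whereas you apply the defining property of \limslender{} groups directly to the Mittag--Leffler sequence $(T_i)$ and kill the resulting factorization through the finite group $T_k$ by torsion-freeness --- both arguments are valid and rest on the same two facts.
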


\begin{proof}
Let $\widehat H = \varprojlim(H_i, p_i^j)$ where $(H_i, p_i^j)$ is an inverse sequence of finitely generated
abelian groups.  Let $T_i$ be the torsion subgroup of $H_i$ and $q_i: H_i \to A_i$ be the quotient homomorphism
with kernel $T_i$. Then $A_i$ is free abelian. Let $(A_i, \bar p_i^j)$ be the inverse sequence of free abelian
groups defined by $\bar p_i^j \circ q_i = q_j \circ p_i^j$.  Then the maps $\{q_i\}$ induce a homomorphism
$q:\widehat H \to \widehat A$ where $\widehat A = \varprojlim (A_i, \bar p_i^j)$.  The short exact sequence
$0\rightarrow T_i \rightarrow H_i \stackrel{q_i}{\rightarrow} A_i \rightarrow 0$ gives rise to the exact
sequence
$0 \rightarrow \widehat T \rightarrow \widehat H \stackrel{q}{\rightarrow} \widehat A \rightarrow \varprojlim^1
(T_i, p_i^j|_{T_i})$ where $\widehat T = \varprojlim(T_i, p_i^j|_{T_i})$.
Then by \cite[Proposition 11.3.13]{Geoghegan1986} the last term is $0$, so $q$ is surjective with kernel
$\widehat T$.

Let $\varphi: \widehat H \to G$ be a homomorphism to a \limslender~ group.  It is immediate that
$\widehat H = \varprojlim (p_i(H_i), p_i^j|_{p_i(H_i)})$.  Since $G$ is \limslender, there exists an $m$ such
that $\ker (p_{m}) \subset \ker (\varphi)$.
Since \slender groups are torsion-free, $\ker (q_{m}\circ p_{m}) \subset\ker(\varphi)$.  Thus we have that
$\ker(q) = \widehat T\subset \ker(q_{m}\circ p_{m})$, therefore $\varphi$ factors through $q$.
\end{proof}

\begin{lem}\label{lem:without hypothesis}
Let $(H_i, p_i^j)$ be an inverse sequence of finitely generated abelian groups.  Then every homomorphism from
$\varprojlim(H_i, p_i^j)$ to a \limslender~ group $G$ factors through a projection, and the
natural homomorphism
$$\nabla_G\colon \varinjlim \Hom\bigl(H_i,G\bigr) \longrightarrow\Hom\bigl(\varprojlim H_i,G\bigr)$$
is bijective.
\end{lem}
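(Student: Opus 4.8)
The plan is to derive both assertions from Theorem~\ref{thm:lim-slender as co-small}(4) together with Lemmas~\ref{lem: free abelian} and~\ref{lem: minimal free factor}. Write $\widehat H=\varprojlim(H_i,p_i^j)$ and $B_i:=\im p_i=p_i(\widehat H)\subseteq H_i$. By Theorem~\ref{thm:lim-slender as co-small}(4), applied to the sequence $(H_i,p_i^j)$ itself, $\nabla_G$ induces a bijection $\varinjlim\Hom(B_i,G)\to\Hom(\widehat H,G)$. Since the $\nabla_G$ of the lemma factors as
$$\varinjlim\Hom(H_i,G)\stackrel{\rho}{\longrightarrow}\varinjlim\Hom(B_i,G)\stackrel{\nabla_G}{\longrightarrow}\Hom(\widehat H,G),$$
with $\rho$ induced by restriction along the inclusions $B_i\hookrightarrow H_i$, the whole statement reduces to showing that $\rho$ is a bijection; note that surjectivity of $\rho$ is exactly the assertion that every homomorphism $\widehat H\to G$ factors through some projection $p_i$.

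Next I would reduce to the finitely generated free abelian case. Because $G$ is \limslender{} it is slender, hence torsion-free, so every homomorphism $H_i\to G$ kills the torsion subgroup $T_i$; writing $A_i:=H_i/T_i$ with quotient maps $q_i\colon H_i\to A_i$, these identify $\varinjlim\Hom(H_i,G)$ with $\varinjlim\Hom(A_i,G)$ and $\varinjlim\Hom(B_i,G)$ with $\varinjlim\Hom(q_i(B_i),G)$. Since $\widehat H\to\widehat A:=\varprojlim A_i$ is surjective by Lemma~\ref{lem: free abelian}, we get $q_i(B_i)=\im(\widehat A\to A_i)$, so we may assume all $H_i=A_i$ are finitely generated free abelian and $B_i=\im(\widehat A\to A_i)$.

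Now apply Lemma~\ref{lem: minimal free factor}: for each $i$ let $C_i\supseteq B_i$ be the free abelian factor of $A_i$ with $C_i/B_i$ finite, and fix a splitting $A_i=C_i\oplus E_i$. One checks $p_i^j(C_j)\subseteq C_i$, so $(C_i,p_i^j|_{C_j})$ is an inverse subsequence with $\varprojlim C_i=\widehat A$ (it is squeezed between $\varprojlim B_i$ and $\varprojlim A_i$, both equal to $\widehat A$). For \emph{surjectivity of $\rho$} I would show that $B_j$ is itself a direct summand of $A_j$ for all sufficiently large $j$ — equivalently $A_j/B_j$ is torsion-free, i.e.\ $C_j=B_j$ — so that, given $\varphi\colon\widehat A\to G$, after using that $G$ is \limslender{} to factor it as $\overline\varphi\circ p_m$ with $\overline\varphi\colon B_m\to G$ and then composing with a bonding map to such a large $j$, one can extend $\overline\varphi$ by $0$ on a complement of $B_j$ to obtain $\psi\colon A_j\to G$ with $\psi\circ p_j=\varphi$. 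For \emph{injectivity of $\rho$}, if $h\colon A_i\to G$ vanishes on $B_i$ then $\ker h$ is a direct summand of $A_i$ (torsion-freeness of $G$ again) containing $B_i$, and the point is to show that $h$ then vanishes on $\im p_i^j$ for some $j>i$, so that $[h]=0$ already in $\varinjlim\Hom(A_i,G)$. Composing the resulting bijection $\rho$ with the bijection of Theorem~\ref{thm:lim-slender as co-small}(4) proves the lemma.

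The step I expect to be the main obstacle is the interplay between a subgroup $B_j\subseteq A_j$ and the ambient group under the bonding maps: for surjectivity one needs that the projected images $B_j=p_j(\widehat A)$ become \emph{pure} in $A_j$, and for injectivity that the images $\im p_i^j$ eventually collapse onto $B_i$ after passing to the free quotient $A_i/\ker h$. I would attack both by tensoring with $\QQ$, observing that the $\QQ$-spans of the images $\im p_i^j$ stabilize as $j\to\infty$ (so the purifications $C_i$ stabilize), and then feeding this back through Lemma~\ref{lem: minimal free factor} and Noetherian bookkeeping for the chains $B_j\subseteq C_j\subseteq A_j$, combined with torsion-freeness of $G$. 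This local-to-ambient control is the real content; the rest of the argument is formal.
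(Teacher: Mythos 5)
Your overall architecture (kill torsion using that $G$ is torsion-free, then compare $B_j=p_j(\widehat H)$ with its purification $C_j$ from Lemma~\ref{lem: minimal free factor}) is the same as the paper's, but the step you yourself flag as the main obstacle is exactly where the argument breaks: the claim that $B_j$ is a direct summand of $A_j$ (i.e.\ $C_j=B_j$) for all large $j$ is false. Counterexample: let $H_k=\ZZ^{k+1}$ with basis $e_0,\dots,e_k$, let $B_k=\langle 2e_0,\ e_0+e_1,\ e_2,\dots,e_k\rangle$ (index $2$), and define $p_k^{k+1}$ by $e_0\mapsto 2e_0$, $e_1\mapsto -e_0+e_1$, $e_i\mapsto e_i$ for $2\le i\le k$, and $e_{k+1}\mapsto 2e_0$. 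One checks $p_k^{k+1}(H_{k+1})=p_k^{k+1}(B_{k+1})=B_k$, so all images $p_k^j(H_j)$ with $j>k$ equal $B_k$, the sequence is Mittag--Leffler, and $p_k(\widehat H)=B_k$ has index $2$ in $H_k=C_k$ for \emph{every} $k$. Hence $B_j$ is never pure, ``extend by $0$ on a complement of $B_j$'' is never available, and your $\QQ$-tensoring attack cannot help: it only sees $B_j\otimes\QQ=C_j\otimes\QQ$, which is automatic and says nothing about the integral index $[C_j:B_j]$. The paper proves the correct, weaker statement instead: for fixed $k$ the images $p_k^j(C_j)$ stabilize as $j\to\infty$ (the indices $[C_k:p_k^j(C_j)]$ are non-decreasing and bounded by $[C_k:B_k]<\infty$) and the stable image is exactly $B_k$; one then factors $\varphi=\varphi'\circ p_k$ through $H_j$ via $H_j\xrightarrow{q_j}C_j\xrightarrow{p_k^j}B_k\xrightarrow{\varphi'}G$, using the retraction onto the summand $C_j$ upstairs rather than a (nonexistent) retraction onto $B_j$.

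The injectivity half of your plan also cannot be completed, because injectivity of $\rho$ (equivalently of $\nabla_G$ on $\varinjlim\Hom(H_i,G)$) fails in general: take $H_i=\ZZ$ with bonding maps multiplication by $2$ and $G=\ZZ$; then $\varprojlim H_i=0$ while $\varinjlim\Hom(H_i,\ZZ)\cong\ZZ[1/2]\neq 0$, so a nonzero class maps to the zero homomorphism. (Here $h=\mathrm{id}$ vanishes on $B_1=0$ but on no $\im p_1^j=2^{j-1}\ZZ$, refuting the ``point to show'' in your injectivity step.) The bijectivity that actually holds is the one of Theorem~\ref{thm:lim-slender as co-small}(4), onto $\varinjlim\Hom(p_i(\varprojlim H_i),G)$; note that the paper's own proof of this lemma only establishes the factorization (surjectivity) statement, which is what the Universal Coefficients Theorem uses.
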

\begin{proof}
By Lemma \ref{lem: free abelian}, we may assume that $(H_i, p_i^j)$ is an inverse sequence of finitely generated
free abelian groups.  Let $\widehat H = \varprojlim(H_i, p_i^j)$ and suppose that $\varphi: \widehat H \to G$
is a homomorphism to an \limslender~ group $G$.  Then there exist an $m_0$ and
$\varphi': p_{m_0}(\widehat H) \to G$ such that $\varphi = \varphi'\circ p_{m_0}$.

By Lemma \ref{lem: minimal free factor}, there exists $H_i'$ a free abelian factor of $H_i$ such that
$p_i(\widehat H) $ is a finite index subgroup  of $ H_i'$.  Let $q_i: H_i \to H_i'$ be the quotient homomorphism
projecting $H_i$ onto the factor $H_i'$.  Notice that $\widehat H = \varprojlim(H_i', p_i^j|_{H_i'})$,
since $p_i(\widehat H) \subset H_i'$.

For any $i\geq j\geq k$, we have that
$$\bigl[H_k', p_{j,k}(H_j')\bigr]\leq \bigl[H_k', p_{i,k}(H_i')\bigr]\leq
\bigl[H_k', p_{k}(\widehat H)\bigr]< \infty.$$  Thus for each $k$, we may choose $n_k\geq k$ such that
$\bigl[H_k', p_{n_k,k}(H_{n_k}')\bigr]= \bigl[H_k', p_{i,k}(H_i')\bigr]$ for all $i\geq n_k$.  Then, for
$i\geq n_k$, the natural quotient map from $H_k'/p_{i,k}(H_{i}')$ to $H_k'/p_{n_k,k}(H_{n_k}')$ is injective
since the groups have the same finite cardinality.  Thus $p_{i,k}(H_{i}')= p_{n_k,k}(H_{n_k}')$ for all
$i\geq n_k$.  It is then an exercise to show that $p_{n_k,k} (H_{n_k}') = p_{k} (\widehat H)$ for any $k$.
In particular, $p_{n_{m_0},{m_0}} (H_{n_{m_0}}') = p_{m_0} (\widehat H)$, which allows us to define
$\bar \varphi: H_{n_{m_0}} \to G$ by $ \bar \varphi =\varphi'\circ p_{n_{m_0}, m_0}\circ q_{n_{m_0}}$.  Since
$q_{n_{m_0}}|_{p_{n_{m_0}}(\widehat H)}$ is the identity it is immediate that
$\varphi = \bar \varphi\circ p_{n_{m_0}}$.
\end{proof}

The main point of the last Lemma is that in some cases the assumption that an inverse sequence
is Mittag-Leffler is not needed. This is of crucial importance in the next theorem, which is the main
result of this section.

\begin{thm}[Universal Coefficients Theorem for \v Cech Cohomology]\label{thm: uct}
Let $X $ be a compact metric space, represented as a limit of an inverse sequence of finite polyhedra,
$X = \varprojlim X_i$. Then for every slender abelian group we have short exact sequences
$$ 0 \longrightarrow \lim\limits_{i \rightarrow\infty} \operatorname{Ext}\bigl(H_{n-1}(X_i), G\bigr)
\longrightarrow \check{H}^n(X; G) \longrightarrow \Hom(\check{H}_n(X),  G)\longrightarrow 0$$
\end{thm}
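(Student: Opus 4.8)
The plan is to assemble the short exact sequence from the ordinary Universal Coefficients Theorem applied level-by-level, then pass to the limit. For each finite polyhedron $X_i$ we have the split short exact sequence
$$0\longrightarrow\operatorname{Ext}\bigl(H_{n-1}(X_i),G\bigr)\longrightarrow H^n(X_i;G)\longrightarrow\Hom\bigl(H_n(X_i),G\bigr)\longrightarrow 0,$$
and these sequences are compatible with the bonding maps $p_i^j$, so they form a short exact sequence of inverse sequences of abelian groups. First I would apply the $\varprojlim$-$\varprojlim^1$ six-term exact sequence to this. Since the groups $H_{n-1}(X_i)$ are finitely generated and the bonding maps behave well, the inverse sequence $\bigl(\Hom(H_n(X_i),G)\bigr)$ is Mittag-Leffler (the fundamental groups, hence homology groups, of finite polyhedra approximating a Peano continuum satisfy Mittag-Leffler; more directly, each $\Hom(H_n(X_i),G)$ maps onto a stabilizing image), which kills the relevant $\varprojlim^1$ term and yields
$$0\longrightarrow\varprojlim\operatorname{Ext}\bigl(H_{n-1}(X_i),G\bigr)\longrightarrow\varprojlim H^n(X_i;G)\longrightarrow\varprojlim\Hom\bigl(H_n(X_i),G\bigr)\longrightarrow 0.$$

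The next step is to rewrite the outer two terms. The middle term is not yet $\check H^n(X;G)$: \v{C}ech cohomology is the \emph{direct} limit $\varinjlim H^n(X_i;G)$, not the inverse limit. However, since cohomology is contravariant the maps $H^n(X_i;G)\to H^n(X_j;G)$ for $i<j$ point the "right way", and what I actually want is to run the argument with direct sequences: the UCT sequences for the $X_i$ form a short exact sequence of \emph{direct} sequences, and $\varinjlim$ is exact, giving
$$0\longrightarrow\varinjlim\operatorname{Ext}\bigl(H_{n-1}(X_i),G\bigr)\longrightarrow\check H^n(X;G)\longrightarrow\varinjlim\Hom\bigl(H_n(X_i),G\bigr)\longrightarrow 0.$$
Wait — but $\operatorname{Ext}$ and $\Hom$ of the \emph{homology} groups are contravariant in $X_i$, so $\bigl(\operatorname{Ext}(H_{n-1}(X_i),G)\bigr)$ and $\bigl(\Hom(H_n(X_i),G)\bigr)$ are again inverse sequences; it is $H^n(X_i;G)$ that forms a direct sequence. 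So the correct bookkeeping is: the UCT gives a short exact sequence relating one direct sequence (the $H^n(X_i;G)$) to two inverse sequences, which does not directly admit a limit. The clean route is instead to note $\varinjlim H^n(X_i;G)=\varinjlim\operatorname{Hom}\bigl(H^n\text{-type data}\bigr)$ and use the natural bijections $\Hom(\varinjlim,-)\cong\varprojlim\Hom(-,-)$ together with Lemma~\ref{lem:without hypothesis}.

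The honest plan is therefore: start from $\check H^n(X;G)=\varinjlim H^n(X_i;G)$, split each $H^n(X_i;G)$ via UCT, and take $\varinjlim$ — exactness of $\varinjlim$ gives the sequence with middle term $\check H^n(X;G)$ and outer terms $\varinjlim\operatorname{Ext}(H_{n-1}(X_i),G)$ and $\varinjlim\Hom(H_n(X_i),G)$, where the limits are over the direct system induced by the $p_i^j$ on cohomology-type functors — i.e. these are genuinely direct limits of the groups $\operatorname{Ext}(H_{n-1}(X_i),G)$ with the maps induced covariantly from cohomology. The final step, where slenderness enters, is to identify $\varinjlim\Hom(H_n(X_i),G)$ with $\Hom(\varprojlim H_n(X_i),G)=\Hom(\check H_n(X),G)$: this is exactly the content of Lemma~\ref{lem:without hypothesis}, whose whole point is that for inverse sequences of finitely generated abelian groups mapping to a \limslender{} group, $\nabla_G$ is a bijection \emph{without} a Mittag-Leffler hypothesis; since a slender abelian group is \limslender{} by Corollary~\ref{cor:equivalent for abelian}, the hypothesis applies. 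Thus the displayed sequence follows, and the left term matches because $\varinjlim\operatorname{Ext}(H_{n-1}(X_i),G)$ is by definition $\lim_{i\to\infty}\operatorname{Ext}(H_{n-1}(X_i),G)$ in the notation of the statement.

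The main obstacle is the variance bookkeeping together with the $\operatorname{Ext}$ term: while $\Hom(\varprojlim,-)\cong\varinjlim\Hom(-,-)$ is handled cleanly by Lemma~\ref{lem:without hypothesis}, there is no equally general statement for $\operatorname{Ext}$, so identifying $\varinjlim\operatorname{Ext}(H_{n-1}(X_i),G)$ with $\operatorname{Ext}(\check H_{n-1}(X),G)$ is \emph{not} claimed — and indeed the theorem wisely leaves that term as a direct limit rather than an $\operatorname{Ext}$ of the \v{C}ech homology group. So the real work is (i) checking that $\varinjlim$ applied to the split UCT sequences is legitimate and produces exactly $\check H^n(X;G)$ in the middle (routine, since $\varinjlim$ is exact on abelian groups and commutes with the definition of \v{C}ech cohomology), and (ii) invoking Lemma~\ref{lem:without hypothesis} to collapse the right-hand direct limit of $\Hom$'s into $\Hom(\check H_n(X),G)$, which is where the slenderness hypothesis is genuinely used and is the conceptual heart of the theorem.
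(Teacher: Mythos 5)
Your final plan coincides with the paper's proof: apply the ordinary UCT to each $X_i$, take the direct limit of the resulting split short exact sequences (exactness of $\varinjlim$ gives $\check{H}^n(X;G)$ in the middle and leaves the $\operatorname{Ext}$ term as a direct limit), and invoke Lemma \ref{lem:without hypothesis} — applicable since slender abelian groups are inverse limit slender by Corollary \ref{cor:equivalent for abelian} — to identify $\varinjlim\Hom\bigl(H_n(X_i),G\bigr)$ with $\Hom\bigl(\varprojlim H_n(X_i),G\bigr)=\Hom\bigl(\check{H}_n(X),G\bigr)$. The variance ``obstacle'' you flag is actually a non-issue: homology is covariant and $\Hom(-,G)$, $\operatorname{Ext}(-,G)$ are contravariant, so $\bigl(\Hom(H_n(X_i),G)\bigr)$ and $\bigl(\operatorname{Ext}(H_{n-1}(X_i),G)\bigr)$ are direct sequences just like $\bigl(H^n(X_i;G)\bigr)$, and no $\varprojlim^1$ or Mittag-Leffler argument is needed.
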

\begin{proof}
Notice that $(H_n(X_i), p_{i,j*})$ is an inverse sequence of finitely generated abelian groups.  Thus by Lemma
\ref{lem:without hypothesis}, every homomorphism from their inverse limit to $G$ factors through a projection.

The theorem now follows  by taking the direct limit of the following short exact sequence
$$ 0 \longrightarrow  \operatorname{Ext}\bigl(H_{n-1}(X_i), G\bigr)\longrightarrow {H}^n(X_i; G) \stackrel{h}
{\longrightarrow} \Hom(H_n(X_i),  G)\longrightarrow 0$$
\end{proof}

We conclude with an example that shows that the above result is not valid for groups that are not slender.
Let $B_i$ be a bouquet of $i$ circles.  Then there is a natural map for $B_i$ to $B_{i-1}$ that collapses one of
the circles and is a homeomorphism on the rest of the circles.  The inverse limit of this system is the Hawaiian
earring, which we will denote by $\HH_1$.  Notice that $\operatorname{Ext}\bigl(H_{n-1}(B_i), G\bigr)= 0$
since $H_{n-1}(B_i)$ is free abelian or trivial.

\begin{prop}
If $G$ is a non-slender abelian group $G$, then the homomorphism
$$\nabla_G\circ\overline h\colon \check{H}^1(\mathcal H_1; G) \longrightarrow
\Hom(\check{H}_1(\mathcal H_1,G))$$
is not surjective, so it cannot be extended to a short exact sequence as in Theorem \ref{thm: uct}.
\end{prop}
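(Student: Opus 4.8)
The plan is to reduce the statement to the definition of slenderness by making all the relevant identifications explicit. First I would unwind the inverse sequence computing the \v Cech (co)homology of $\HH_1$. The bonding map $B_i\to B_{i-1}$ collapses one circle, so on first homology it induces a coordinate projection $\ZZ^i=H_1(B_i)\to H_1(B_{i-1})=\ZZ^{i-1}$. Hence $\check H_1(\HH_1)=\varprojlim\bigl(\ZZ^i,\text{projections}\bigr)$ is canonically $\ZZ^\NN$, and the structural projection $\check H_1(\HH_1)\to H_1(B_i)$ is, under the obvious identification, a projection $\pr_i\colon\ZZ^\NN\to\ZZ^i$ onto a finite set of coordinates.

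Next I would show that $\overline h$ is an isomorphism. For the finite polyhedron $B_i$ the Universal Coefficients Theorem gives the exact sequence $0\to\operatorname{Ext}\bigl(H_0(B_i),G\bigr)\to H^1(B_i;G)\stackrel{h}{\to}\Hom\bigl(H_1(B_i),G\bigr)\to 0$, and $\operatorname{Ext}\bigl(H_0(B_i),G\bigr)=\operatorname{Ext}(\ZZ,G)=0$ since $B_i$ is connected. Thus $h$ is an isomorphism for every $i$, and since the direct limit functor is exact, $\overline h=\varinjlim h$ is an isomorphism. Consequently $\nabla_G\circ\overline h$ is surjective if and only if $\nabla_G\colon\varinjlim\Hom\bigl(H_1(B_i),G\bigr)\to\Hom\bigl(\varprojlim H_1(B_i),G\bigr)$ is surjective.

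Then I would identify the image of $\nabla_G$. As recalled in Section \ref{sec:Slender as a co-small object}, for the inverse sequence $\bigl(H_1(B_i)\bigr)$ the image of $\nabla_G$ consists precisely of those homomorphisms $\varprojlim H_1(B_i)\to G$ that factor through some structural projection. Under the identifications above this is exactly the set of homomorphisms $\ZZ^\NN\to G$ that factor through some $\pr_i$, which by definition is all of $\Hom(\ZZ^\NN,G)$ if and only if $G$ is slender. Since $G$ is assumed not slender, some homomorphism $\ZZ^\NN\to G$ fails to factor through any $\pr_i$, so $\nabla_G$ — and hence $\nabla_G\circ\overline h$ — is not surjective. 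Finally, any short exact sequence of the shape appearing in Theorem \ref{thm: uct} would have $\nabla_G\circ\overline h$ as its rightmost, hence surjective, arrow (note that here its left term $\varprojlim\operatorname{Ext}\bigl(H_0(B_i),G\bigr)$ even vanishes), which is now impossible.

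There is no genuinely hard step here; the care needed is in making the identifications $\check H_1(\HH_1)\cong\ZZ^\NN$ and structural projection $\cong\pr_i$ literal enough that the image of $\nabla_G$ coincides on the nose with the set of homomorphisms occurring in the definition of a slender group, and in noting that it is the vanishing $\operatorname{Ext}\bigl(H_0(B_i),G\bigr)=0$ that upgrades $\overline h$ from an epimorphism to an isomorphism, which is what allows the non-surjectivity of $\nabla_G$ to pass to the composite.
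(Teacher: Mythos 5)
Your proposal is correct and follows essentially the same route as the paper: apply the Universal Coefficients Theorem to each $B_i$, use $\operatorname{Ext}\bigl(H_0(B_i),G\bigr)=0$ and exactness of direct limits to see that $\overline h$ is an isomorphism, identify $\check H_1(\HH_1)$ with $\ZZ^\NN$ and the structural projections with the $\pr_i$, and observe that surjectivity of $\nabla_G$ is precisely the slenderness condition for $G$. The only difference is that you spell out the identifications more explicitly than the paper does; no gap.
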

\begin{proof}
Since $H_0(B_i) = \mathbb Z$ it follows that $\operatorname{Ext}\bigl(H_{0}(B_i), G\bigr) = 0$.
As before we can take the direct limit of the short exact sequences
$$0 \longrightarrow 0\longrightarrow H^1(B_i; G) \stackrel{h_i}{\longrightarrow} \Hom(H_1(B_i),  G)
\longrightarrow 0$$
to obtain the short exact sequence,
$$0 \longrightarrow  0\longrightarrow \check{H}^1(\mathcal H_1; G) \stackrel{h}{\longrightarrow}
\lim\limits_{i
\rightarrow\infty}\Hom(H_1(B_i),  G)\longrightarrow 0.$$

Notice that $\varinjlim\Hom(H_1(B_i),  G) = \varinjlim\Hom(\ZZ^i, G)$ and
$\Hom(\varprojlim H_1(B_i),  G)=\Hom\bigl(\ZZ^\NN,  G\bigr)$.
Thus the natural map $\nabla_G\colon\varinjlim\Hom(H_1(B_i), G)\to\Hom(\check{H}_1(\mathcal H_1),  G)$
is surjective if and only if the abelian group $G$ is \limslender.
Therefore
$$ \check{H}^1(\mathcal H_; G) \stackrel{\nabla_G\circ\overline h}{\longrightarrow}
\Hom(\check{H}_1(\mathcal H_1), G)$$
is not surjective.
\end{proof}

\end{document}